\newtheorem{theorem}{Theorem}[section]
\newtheorem{proposition}[theorem]{Proposition}
\newtheorem{corollary}[theorem]{Corollary}
\theoremstyle{definition}
\newtheorem{definition}[theorem]{Definition}
\newtheorem{example}[theorem]{Example}
\theoremstyle{remark}
\newtheorem{remark}[theorem]{Remark}
\DeclareMathOperator{\rank}{rk}
\numberwithin{equation}{section}
\begin{document}

\title[Flows that have a non-saddle or a $W$-set]{Topology and dynamics of a flow that has a non-saddle set or a $W$-set}

%    Only \author and \address are required; other information is
%    optional.  Remove any unused author tags.

%    author one information
% \author[short version for running head]{name for top of paper}
\author{H. Barge}
\address{E.T.S. Ingenieros Informáticos, Universidad Politécnica de Madrid, 28660 Madrid, Spain}
\email{h.barge@upm.es}

\author{J.J. Sánchez-Gabites}
\address{Facultad de Ciencias Matemáticas, Universidad Complutense de Madrid, 28040 Madrid, Spain}
\curraddr{}
\email{jajsanch@ucm.es}
%    author two information
\author{J.M.R. Sanjurjo}
\address{Facultad de Ciencias Matemáticas, Universidad Complutense de Madrid, 28040 Madrid, Spain}
\email{jose\_sanjurjo@mat.ucm.es}

\thanks{The authors are funded by the Spanish Ministerio de Ciencia e Innovación through grant PGC2018-098321-B-I00 and, for the second author, also grant RYC2018-025843-I}

%    \subjclass is required.
\subjclass[2020]{Primary: 37B30, Secondary: 37B25, 55M25}

\date{}

\dedicatory{}

%    Abstract is required.
\begin{abstract} The aim of this paper is to study dynamical and topological properties of a flow in the region of influence of an isolated non-saddle set or a $W$-set in a manifold. These are certain classes of compact invariant sets in whose vicinity the asymptotic behaviour of the flow is somewhat controlled. We are mainly concerned with global properties of the dynamics and establish cohomological relations between the non-saddle set and the manifold. As a consequence we obtain a dynamical classification of surfaces (orientable and non-orientable). We also examine robustness and bifurcation properties of non-saddle-sets and study in detail the behavior of $W$-sets in 2-manifolds.
\end{abstract}

\maketitle

\section{Introduction}

We study in the paper topological aspects of the theory of non-saddle sets and $W$-sets and of their region of influence. We define these classes of compact invariant sets in the next paragraphs. Their common feature is that the long term behaviour of the flow in their vicinity is controlled to a certain extent. Thus, in some respects, the study of this kind of sets can be seen as a general theory of stability and attraction, which extends the classical one and includes recent developments such as the theory of unstable attractors with no external explosions. We are mainly concerned with global properties of the dynamics.

The following definition is due to Bhatia \cite{BhJDE}. A compact invariant set $K$ is called \emph{non-saddle} if every neighbourhood $U$ of $K$ contains another neighbourhood $V$ of $K$ such that for every $x \in V$ either $x \cdot [0,+\infty) \subseteq U$ or $x \cdot (-\infty,0] \subseteq U$. Thus non-saddle sets exhibit a sort of ``mixed'' Lyapunov stability in the sense that trajectories of points that are sufficiently close to $K$ remain close to $K$ either in positive or negative time. Clearly a stable attractor is a non-saddle set, but certain mildly unstable attractors are also non-saddle sets, and the same holds in reverse time (repellers). We will always consider non-saddle sets $K$ that are isolated in the sense of Conley; i.e. $K$ is the largest invariant set in some neighbourhood of itself. Then the flow in the vicinity of $K$ can be decomposed into two disjoint parts: one comprised of points whose trajectories remain close and converge asymptotically to $K$ in positive time and the other comprised of points that satisfy the dual phenomenon in negative time. In other words, $K$ separates $M$ locally into components that are either attracted or repelled by $K$.

A compact invariant set $K$ is called a \emph{$W$-set} if it has a neighbourhood $U$ with the property that the orbit of every point in $U$ passes arbitrarily close to $K$; that is, for every $x \in U$ and every neighbourhood $V$ of $K$ there exists $t$ such that $x \cdot t \in V$. For example, an (isolated) non-saddle set is a $W$-set, and so is any attractor (whether stable or not) and, more generally, the union of the Morse sets of a Morse decomposition of $M$. The ``$W$'' in ``$W$-set'' stands for ``witness'', since an observer sitting at $K$ would observe $x$ passing infinitely many times, and arbitrarily close to, $K$.

Both (isolated) non-saddle sets and $W$-sets capture, to some extent, the long term behaviour of nearby points. In the case of non-saddle sets, they contain the $\omega$- or $\omega^*$-limit of every nearby point. In the case of $W$-sets, they satisfy the much weaker condition that they intersect the $\omega$- or $\omega^*$-limit of every nearby point. The set of points in $M$ that are dynamically connected to $K$ in this fashion is, in both cases, an open and invariant set which we think of as being ``large'' as opposed to the ``small'' set $K$. We call it the \emph{region of influence} (for non-saddle sets) and the \emph{witness region} (for $W$-sets) of the $K$, respectively.

In Sections \ref{sec:nonsaddle1} and \ref{sec:nonsaddle2} we obtain relations between the cohomology of a non-saddle set and global properties of the dynamics. For instance, Theorem \ref{thm:exact} provides an exact sequence relating the cohomology of a global non-saddle set to that of the manifold and a section of the flow. An interesting consequence of this result is a dynamical classification of surfaces (orientable and non-orientable), obtained in Corollary \ref{cor:classification}. In Section \ref{sec:bifurcations} we study bifurcation properties resulting from a change of stability of non-saddle sets. This phenomenon can lead to the appearance of families of new attractors or repellers in the complement of the non-saddle set and converging to it.
 
Sections \ref{sec:Wsets1} and \ref{sec:Wsets2} are devoted to $W$-sets. We obtain some relationships for this new class that generalize the classical Morse inequalities for flows defined on 2-dimensional manifolds. We prove, for example, that when the phase space is a manifold and the $W$-set is global then the Euler characteristics of the manifold agrees with that of the Conley index of the $W$-set. We also give some sufficient (and in some cases also necessary) conditions for a $W$-set to be non-saddle. An interesting consequence of our study of $W$-sets in 2-manifolds is the determination of a characteristic property for a non-saddle set not to have dissonant points.

\section{Background} \label{sec:background}

We consider a continuous flow $\varphi : M \times \mathbb{R} \longrightarrow M$ on a boundariless manifold $M$. For our purposes we will assume without loss of generality that $M$ is connected. 
 We abbreviate $\varphi(x,t)$ as $x t$. A set $K$ is invariant if $K \mathbb{R} = K$. In order to avoid trivial cases we will usually take invariant sets to satisfy $\emptyset \neq K \subsetneq M$ unless explicitly stated otherwise.

\subsection{Attractors} An attractor (or a stable attractor, for emphasis) is a compact invariant set $K$ which satisfies the following two conditions: (i) it has a neighbourhood $U$ such that the $\omega$-limit of any $x \in U$ is nonempty and contained in $K$; (ii) it is (positively) stable in the sense of Lyapunov; i.e. every neighbourhood $U$ of $K$ contains another neighbourhood $V$ of $K$ such that $V  [0,+\infty) \subseteq U$. When a compact invariant set satisfies (i) but not necessarily (ii) we will call it an unstable attractor. In either case the largest set $U$ that satisfies (i) is called the basin of attraction of $K$ and denoted by $\mathcal{A}(K)$. It is open and invariant. By time-reversing conditions (i) and (ii) one obtains the notion of a repeller $K$ and its basin of repulsion $\mathcal{R}(K)$.

\subsection{Isolated invariant sets} Let $N$ be a compact subset of phase space $M$. Its maximal invariant subset is the largest invariant set it contains; i.e. $\{x \in N : x (-\infty,+\infty) \subset N\}$. We will also consider the sets \[N^+ := \{x \in N : x [0,+\infty) \subset N\} \quad \text{ and } \quad N^- := \{x \in N : x (-\infty,0] \subset N\},\] which are the largest positively (resp. negatively) invariant subsets of $N$. When nonempty, all these sets are compact.

A compact invariant set $K$ is called isolated (in the sense of Conley \cite{Con}) when it is the maximal invariant subset of some neighbourhood $N$ of itself, called an isolating neighbourhood of $K$. Clearly $K = N^+ \cap N^-$. Of course, an isolated invariant set $K$ usually has many different isolating neighbourhoods $N$, and it is convenient to work with ones that have some nice properties, mainly related to how the flow intersects the boundary of $N$. We describe this next.

\subsection{Isolating blocks} A point $x$ in the boundary (the topological frontier) $\partial N$ is called an immediate exit point if $x  (0,\epsilon) \not\subset N$ for every $\epsilon > 0$ and an immediate entry point if $x (-\epsilon,0) \not\subset N$ for every $\epsilon > 0$. The set of immediate exit points is denoted by $N^o$; the set of immediate entry points by $N^i$. For the purposes of this paper we will need conditions much more stringent than these. We say that $x$ is a transverse entry point (into $N$) if there exists $\epsilon > 0$ such that $x (-\epsilon,0) \cap N = \emptyset$ and $x (0,\epsilon) \subset {\rm int}\ N$; a transverse exit point if there exists $\epsilon > 0$ such that $x  (-\epsilon,0) \subset {\rm int}\ N$ and $x (0,\epsilon) \cap N = \emptyset$, and an exterior tangency point if there exists $\epsilon > 0$ such that $x (-\epsilon,0)$ and $x  (0,\epsilon)$ are both disjoint from $N$.

Suppose that every point in $\partial N$ is of one of the three types just described. One easily sees that $N^o$ is comprised of the transverse exit and (exterior) tangency points, and similarly $N^i$ consists of the transverse entry and exterior tangency points. If additionally both $N^o$ and $N^i$ are closed, then $N$ is called an \emph{isolating block}. (We warn the reader that this terminology is not standardized across the literature). Every isolated invariant set $K$ in $M$ admits an isolating neighbourhood $N$ which is an isolating block (\cite{chjde}). Also, in dimensions $n = 2,3$ every isolated invariant set admits a basis of isolating blocks $N$ with the additional property that $N$ is a compact $n$-manifold with boundary $\partial N$ and $N^i$ and $N^o$ are compact $(n-1)$--manifolds which intersect along their boundary. For smooth flows this is actually true in all dimensions (\cite{ConEast}).

The neat behaviour of a flow on the boundary of an isolating block $N$ is very useful. The ``exit time'' map $t^o : N \longrightarrow [0,+\infty]$ is defined by \[t^o(x) := \sup \{t \in [0,+\infty) : x  [0,t] \subset N\}\] and it is continuous. Notice that if a point $x \in N$ exits $N$ in forward time (i.e. $t^o(x) < +\infty$), it must do so through a point in $N^o$; moreover, it will exit $N$ the first time it hits $N^o$. The whole trajectory segment $x \cdot (0,t^o(x))$ is contained in the interior of $N$.

\subsection{Homotopies induced by the flow} In the paper we will often use the flow to construct homotopies and ``infinite time'' homotopies (more precisely, shape equivalences). Since this is a standard technique we will not give details and limit ourselves to illustrating it here with an example. Let $N$ be an isolating block for an isolated invariant set $K$ and consider its subset $N^+$. This is clearly positively invariant, and the flow $\varphi$ restricts to a semiflow $\overline{\varphi}$ on it. Define the sets $P_k \subseteq N^+$ to be the image of $N^+$ under the semiflow $\overline{\varphi}$ at times $k = 0,1,2,\ldots$ These $P_k$ form a decreasing sequence of compact sets which starts at $P_0 = N^+$ and whose intersection is $K$. Moreover each inclusion $P_{k+1} \subseteq P_k$ is a homotopy equivalence, with the flow providing a homotopy inverse. Thus by the continuity property of \v{C}ech cohomology one has $\check{H}^*(N^+) \cong \check{H}^*(K)$, where the isomorphism is induced by the inclusion $K \subseteq P$. Intuitively, $\overline{\varphi}$ provides a deformation retraction of $N^+$ onto $K$ in ``infinite time'', which is why the end result is an isomorphism in \v{C}ech cohomology (but perhaps not in singular cohomology). A detailed account of \v{C}ech cohomology can be found in the book by Spanier \cite{Span}. Besides continuity, we will also need the strong excision property of \v{C}ech cohomology: for any compact pair $(A,B)$ one has $\check{H}^*(A,B) = \check{H}^*(A/B,[B])$. These properties are true regardless of the coefficient group taken to compute the cohomology.

%The same idea can be applied, for example, to the semiflow $\overline{\varphi}$ induced by $\varphi$ in the quotient $N/N^o$. The $P_k$ are defined in the same way as before and now their intersection is the image of $N^-$ in the quotient space $N/N^o$; i.e. $N^-/n^-$ (INTRODUCIR LA NOTACION). Hence $\check{H}^*(N/N^o,[N^o]) \cong \check{H}^*(N^-/n^-,[n^-])$.

\subsection{The Conley index} A convenient way to describe the dynamics near an isolated invariant set $K$ is provided by the Conley index $h(K)$ (\cite{Con}). We recall briefly the definition. An index pair for $K$ is a compact pair $(N,L)$ such that:
\begin{itemize}
    \item $\overline{N \setminus L}$ is an isolating neighbourhood of $K$.
    \item $L$ is positively invariant in $N$: if $x \in L$ and $x [0,t] \subseteq N$, then $x  [0,t] \subset L$.
    \item For every $x \in N$, if $x [0,+\infty) \not\subset N$ then there exists $t \geq 0$ such that $x t \in L$ (i.e. if the forward orbit of a point $x \in N$ exits $N$, it must do so through $L$).
\end{itemize}

For example, if $N$ is an isolating block then $(N,N^o)$ is an index pair. The Conley index of $K$ is defined as the homotopy type $h(K)$ of the pointed space $(N/L,[L])$. The foundational theorem about the Conley index is that $h(K)$ is independent of the index pair $(N,L)$. One usually works with the (co)homological Conley index, which is just the (co)homology of $h(K)$, or even with the Conley-Euler characteristic, which is the Euler characteristic of $h(K)$.

%Suppose the phase space is a boundariless $2$-manifold. We can then take an isolating block $N$ which is a compact $2$-manifold, with $N^o$ a compact submanifold of $\partial N$. Since $n^-$ is a compact subset of the $1$--manifold $\partial N$, its connected components are points, arcs, and at most finitely many copies of $\mathbb{S}^1$. Thus the cohomology of $n^-$ is finitely generated except possibly in degree zero, and $\chi(n^-)$ is the number of points or arcs in $n^-$. The cohomological Conley index of $K$ is \[H^*(N/N^o,[N^o]) = H^*(N,N^o) = \check{H}^*(N^-,n^-).\] These groups are finitely generated because $N$ and $N^o$ are compact manifolds. It follows from the long exact sequence for the pair $(N^-,n^-)$ that $\check{H}^0(N^-)$ is finitely generated if and only if so is $\check{H}^0(n^-)$, and $\check{H}^k(N^-)$ is finitely generated for $k =1,2$ (and vanishes for larger $k$). In particular $\chi(N^-)$ is always defined and it is $+\infty$ precisely when $N^-$ has infinitely many connected components. The same holds true for $K$. Finally, the Euler characteristic of the Conley index is then \[\chi h(K) = \chi (N,N^o) = \chi(N^-,n^-) = \chi(N^-) - \chi(n^-) = \chi(K) - \chi(n^-).\] Since $\chi(n^-)$ is the number of points or arcs in $n^-$, it follows that $\chi h(K) \leq \chi(K)$ and the equality holds if and only if $\chi(n^-) = 0$, i.e. $n^-$ consists a some copies of $\mathbb{S}^1$, which entails that it is a union of components of $\partial N$. 

\section{The structure of a flow having a global non-saddle set}\label{sec:nonsaddle1}

Recall from the Introduction that a non-saddle set $K$ is a compact invariant set such that every neighbourhood $U$ of $K$ contains a neighbourhood $V$ of $K$ with the property that every $x \in V$ has its positive or its negative semiorbit contained in $U$. If a non-saddle set $K$ is isolated then it possesses a basis of isolating blocks that are of the form $N=N^+\cup N^-$ (see for instance \cite[Proposition~3]{BSdis}). In fact, this property characterizes when an isolated invariant set is non-saddle. In this paper we only deal with flows defined on manifolds and, hence, isolating blocks of the form $N=N^+\cup N^-$ are ANRs. As a consequence, their \v Cech homology and cohomology groups coincide with their singular homology and cohomology groups and are finitely generately and zero for almost every dimension. Moreover, using homotopies provided by the flow, it can be proved that the inclusion $K \subset N$ induces isomorphisms $\check{H}^*(N)\cong\check{H}^*(K)$ and, therefore, $K$ is also of finite type (see the proof of \cite[Theorem~4]{GMRSo} for a detailed account of the construction). In particular, $K$ has a finite number of connected components and all of them are isolated non-saddle sets. We shall also make use of the fact that if $K$ is the union of finitely many isolated non-saddle sets, $K$ is also an isolated non-saddle set.

Given an isolated non-saddle set $K$, and similar to the case of attractors and repellers, it makes sense to consider the set of points $\mathcal{I}(K)$ that are \emph{influenced} by $K$, that is, points whose $\omega$ or $\omega^*$-limit is non-empty and contained in $K$. This neighbourhood of $K$ is called its \emph{region of influence} and turns out to be open and invariant (see \cite[Proposition~5]{BSdis}). While the basin of attraction of an attractor and the region of influence of an isolated non-saddle set have many common features, the structure of the flow in the latter may be more involved. This structure has been thoroughly explored in \cite{BSdis}. We shall describe it briefly since it will play a role along the paper. If $K$ is an isolated non-saddle set, $N$ is an isolating block of the form $N=N^+\cup N^-$ and $\mathcal{I}(K)$ is its region of influence, then $\mathcal{I}(K)\setminus K$ has a finite number of components. Each component $C$ of $\mathcal{I}(K)\setminus K$ can be of one of the following four types:
\begin{itemize}
    \item All the points of $C$ are attracted by $K$ and $\omega^*(x)\cap K=\emptyset$. The flow $\varphi_{|C}$ is parallelizable with a component of $N^i$ as a global section.
    \item All the points of $C$ are repelled by $K$ and $\omega(x)\cap K=\emptyset$. The flow $\varphi_{|C}$ is parallelizable with a component of $N^o$ as a global section.
    \item All the points of $C$ are homoclinic, that is $\emptyset\neq \omega(x)\subset K$ and $\emptyset\neq \omega^*(x)\subset K$. The flow $\varphi_{|C}$ is parallelizable with a component of $N^o$ (or $N^i$) as a global section.
    \item $C$ contains all three kinds of points described in the previous cases and the flow $\varphi{|_C}$ is not parallelizable. There are points in $C$ that are not homoclinic but are limit of homoclinic points. These points are called \emph{dissonant points}.
\end{itemize}

In this section we shall deal with what we call \emph{global} non-saddle sets $K$ in closed manifolds $M$, namely isolated non-saddle sets whose region of influence is the whole phase space $M$. Such a non-saddle set $K$ is in fact a (global) unstable attractor with no external explosions, a notion introduced in \cite{AthPac} and studied in \cite{MSGS}, \cite{SGTrans} (their precise definition is recalled in the last section). The existing literature about these is mostly confined to the case when $K$ is connected, essentially because a component of an unstable attractor need not be an unstable attractor any more. Removing the connectedness assumption produces an important change of scenario: for example, there are no connected, global unstable attractors of this kind in $\mathbb{S}^n$ ($n \geq 1$) but it is straightforward to construct non-connected ones. By contrast, an isolated invariant set is non-saddle if and only if the same is true of its components, and so connectedness is largely irrelevant in this framework.

When $K$ is a global non-saddle set in a closed manifold all the points of $M\setminus K$ are homoclinic, and therefore $\varphi|_{M\setminus K}$ is parallelizable. We shall denote by $S$ any section of $M \setminus K$; precisely, $S$ is a closed subset of $M \setminus K$ such that the map $S \times \mathbb{R} \longrightarrow M \setminus K$ given by $(x,t) \longmapsto xt$ is a homeomorphism. For example, if $N = N^+ \cup N^-$ is an isolating block for $K$ then $S = N^o$ is such a global section. Exploiting this product structure of $M \setminus K$ we obtain the following result that characterizes the homotopy type of the quotient space $M/K$ and leads to relationships between the cohomologies of $M$, $K$ and the section $S$.

\begin{theorem}\label{thm:exact}
 Let $K$ be a global non-saddle set and $S$ a section of $M \setminus K$. Then, the quotient $M/K$ is homotopy equivalent to $\Sigma S\vee \mathbb{S}^1$, were $\Sigma S$ denotes the suspension of $S$. As a consequence the cup product in $\check{H}^*(M,K)$ is trivial. Moreover, there exists an exact sequence of the form
 \[
\cdots\longrightarrow\check{H}^{k-1}(S)\longrightarrow \check{H}^k(M)\longrightarrow\check{H}^k(K)\longrightarrow\check{H}^k(S)\longrightarrow \check{H}^{k+1}(M)\longrightarrow\cdots
 \]
\end{theorem}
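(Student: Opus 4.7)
The strategy is to pin down the homotopy type of $M/K$ from the product structure of $M\setminus K$ and then deduce both the cup-product assertion and the exact sequence from it. As a concrete choice of section I take $S=N^o$ for an isolating block $N=N^+\cup N^-$; this $S$ is compact, and any other section is related to it by a flow-induced homeomorphism, so the homotopy type of $\Sigma S$ does not depend on the choice.

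To identify $M/K$, reparametrize $\mathbb{R}$ as $(-1,1)$ by $t\mapsto\tan(\pi t/2)$ and define
\[
F\colon S\times[-1,1]\longrightarrow M/K,\qquad F(x,t)=x\tan(\pi t/2)\ \text{for }|t|<1,\quad F(x,\pm 1)=[K].
\]
On the interior $F$ is the composition of the parallelization $(x,t)\mapsto xt$ with the quotient projection $M\to M/K$. Continuity at $S\times\{\pm 1\}$ amounts to uniform convergence $xt\to K$ as $t\to\pm\infty$ for $x\in S$, which follows from the compactness of $S$ together with the fact that every orbit of $M\setminus K$ has its $\omega$- and $\omega^*$-limit contained in $K$. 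Being injective off the two collapsed ends, $F$ descends to a continuous bijection from $Y=(S\times[-1,1])/(S\times\{-1,1\})$ onto $M/K$, which is a homeomorphism by the standard compact-to-Hausdorff argument. But $Y$ is obtained from the unreduced suspension of $S$ (which is path-connected and homotopy equivalent to $\Sigma S$) by identifying its two cone points, and a standard manoeuvre (attach an arc between the two identified points and collapse it, using that the inclusion of a two-point subspace into a path-connected CW-like space is a cofibration) shows this is homotopy equivalent to $\Sigma S\vee\mathbb{S}^1$.

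The remaining assertions follow formally. The cup-product claim is immediate from $\check{H}^*(M,K)\cong\tilde{\check{H}}^*(M/K)\cong\tilde{\check{H}}^*(\Sigma S)\oplus\tilde{\check{H}}^*(\mathbb{S}^1)$, since both summands carry trivial cup-product rings (the first because every reduced suspension-cohomology class vanishes on one of the two contractible cones of $\Sigma S$, the second because it is concentrated in degree one) and the cohomology of a wedge splits as a ring with no cross terms. For the exact sequence, the wedge-suspension decomposition gives $\check{H}^k(M,K)\cong\tilde{\check{H}}^{k-1}(S)\oplus\tilde{\check{H}}^k(\mathbb{S}^1)$, which a short case analysis identifies with the unreduced group $\check{H}^{k-1}(S)$ for every $k\geq 0$; in particular the $\mathbb{Z}$ coming from $\tilde{\check{H}}^1(\mathbb{S}^1)$ when $k=1$ exactly accounts for the jump from reduced to unreduced $H^0(S)$. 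Substituting this isomorphism into the long exact cohomology sequence of the pair $(M,K)$ produces the stated exact sequence.

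The delicate point is the continuity of $F$ on $S\times\{\pm 1\}$: everything else reduces to formal homotopy-theoretic manipulations once one knows that orbits issuing from the compact section $S$ enter every prescribed neighbourhood of $K$ uniformly in the initial point.
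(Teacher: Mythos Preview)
Your argument is correct and follows essentially the same route as the paper: identify $M/K$ with $(S\times[-1,1])/(S\times\{-1,1\})\simeq\Sigma S\vee\mathbb{S}^1$, read off the triviality of the cup product from the wedge decomposition, and substitute $\check{H}^k(M,K)\cong\check{H}^{k-1}(S)$ into the long exact sequence of the pair. The only cosmetic difference is that the paper asserts the homeomorphism in one line via the one-point compactification of $S\times\mathbb{R}$, whereas you build the map $F$ explicitly; your uniform-convergence justification is most cleanly phrased as ``$M\setminus U$ is compact in $M\setminus K\cong S\times\mathbb{R}$, hence bounded in the $\mathbb{R}$-factor,'' which uses compactness of $M$ rather than of $S$ alone.
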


\begin{proof} The quotient $M/K$ is just the one point compactification of $S\times\mathbb{R}$, which is homeomorphic to the quotient $S\times[-1,1]/S\times\{-1,1\}$. This quotient has the homotopy type of $\Sigma S\vee \mathbb{S}^1$ where $\Sigma$ denotes the suspension. In particular \[
\check{H}^*(M,K)\cong \check{H}^*(M/K,[K])\cong\check{H}^*(\Sigma S\vee \mathbb{S}^1,\{*\}),
\] where the first isomorphism comes from the strong excision property of \v Cech cohomology. Moreover, we have that there is a ring isomorphism
\[
\check{H}^*(\Sigma S\vee \mathbb{S}^1,\{*\})\cong \check{H}^*(\Sigma S,\{*\})\oplus \check{H}^*(\mathbb{S}^1,\{*\}).
\]

We prove that the ring structure in $\check{H}^*(M,K)$ is trivial by analyzing the expression just obtained. Since the product of two cohomology classes in $\check{H}^*(\mathbb{S}^1,\{*\})$ is trivial, it remains to see that so is the product of two classes in $\check{H}^*(\Sigma S,\{*\})$. This is a standard fact in cohomology theory but we include it here for the sake of completeness. 

Since $\Sigma S$ is connected we may choose the basepoint $*$ freely, and we take it to belong to $S$. The suspension $\Sigma S$ can be covered by two contractible open subsets $U_+$ and $U_-$, namely the cones $S\times (-1/2,1]/S\times\{1\}$ and $S\times[-1,1/2) /S\times\{-1\}$. Notice that the relative cup product satisfies 
\[
\smile:\check{H}^*(\Sigma S,U_+)\times \check{H}^*(\Sigma S,U_-)\longrightarrow\check{H}^*(\Sigma S,U_+\cup U_-)=\check{H}^*(\Sigma S,\Sigma S),
\]
and hence it must be zero. In addition, since $U_\pm$ are contractible, the inclusions $i_{\pm}:(\Sigma S, \{*\})\hookrightarrow (\Sigma S,U_{\pm})$ induce isomorphisms in the corresponding cohomology rings. It follows from the naturality of the cup product that, if $i:(\Sigma S, \{*\})\hookrightarrow (\Sigma S,\Sigma S)$ denotes the inclusion then, given cohomology classes $\alpha_1,\alpha_2\in \check{H}^*(\Sigma S,\{*\})$, we have that 
\[
\alpha_1\smile\alpha_2=i^*(i_+^{*-1}(\alpha_1)\smile i_-^{*-1}(\alpha_2))
\]
and as a consequence this product must be trivial.

The Mayer-Vietoris sequence for $\Sigma S = U_+ \cup U_-$ shows that $\check{H}^k(\Sigma S) \cong \check{H}^{k-1}(S)$ for $k \geq 2$ and $\check{H}^1(\Sigma S$ is the reduced \v{C}ech cohomology of $S$. Together with the preceding discussion, it follows that $\check{H}^k(M,K)\cong \check{H}^k(\Sigma S)\cong \check{H}^{k-1}(S)$ for $k \geq 2$ and $\check{H}^1(M,K)\cong\check{H}^1(\Sigma S)\oplus\mathbb{Z}\cong \check{H}^o(S)$. In addition, $\check{H}^0(M,K)\cong 0$. Replacing this in the long exact sequence of the pair $(M,K)$ yields the long exact sequence in the statement of the theorem.
\end{proof}

\begin{corollary}\label{cor:1}
    Let $K$ be a global non-saddle set and suppose that $H^1(M)=0$. Let $c$ be the number of connected components of $M \setminus K$. Then $K$ has $c+1$ connected components, of which (at least) one is an attractor and another one is a repeller. In particular, if $K$ has exactly $2$ components they form an attractor-repeller decomposition of $M$.
\end{corollary}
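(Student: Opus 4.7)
The plan is to extract the component count from Theorem \ref{thm:exact} and then separate the components of $K$ into attractors, repellers, and neither via a counting argument.

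First, I would use the low-degree part of the exact sequence from Theorem \ref{thm:exact}. Since $M$ is connected, $\check{H}^0(M) \cong \mathbb{Z}$; and since each component of $M \setminus K$ is parallelizable with a connected section (for instance, a component of $N^o$ for any isolating block $N = N^+ \cup N^-$), the section $S$ has exactly $c$ components, so $\check{H}^0(S) \cong \mathbb{Z}^c$. Using $H^1(M) = 0$, the sequence reduces to the short exact sequence
\[
0 \to \mathbb{Z} \to \check{H}^0(K) \to \mathbb{Z}^c \to 0,
\]
which splits because $\mathbb{Z}^c$ is free. Hence $\check{H}^0(K) \cong \mathbb{Z}^{c+1}$, and $K$ has $c+1$ connected components.

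Next, for each component $C$ of $M \setminus K$ I would define $\alpha(C)$ to be the component of $K$ containing $\omega^*(x)$ for some (and hence any) $x \in C$; define $\beta(C)$ analogously using $\omega(x)$. Well-definedness requires two facts: that $\omega^*(x)$ is connected, and that the component containing it is independent of $x \in C$. The latter follows from local constancy, obtained by picking $t$ sufficiently negative so that $x \cdot t$ lies in the interior of the component $N^-_i$ of $N^-$ corresponding to $\alpha(C)$; points close to $x$ map under the same backward time into the same component of $N^-$, and connectedness of $C$ forces the assignment to be constant.

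The key step is the observation that, thanks to the non-saddle property, $K_i$ is an attractor if and only if $K_i \notin \operatorname{Im}(\alpha)$, and a repeller if and only if $K_i \notin \operatorname{Im}(\beta)$. Indeed, if $K_i$ is not an attractor then there exist points arbitrarily close to $K_i$ whose forward orbits escape a fixed small neighborhood of $K_i$, and the non-saddle property forces their backward orbits to remain near $K_i$, so that $\omega^*$-limit lies in $K_i$ and thus $K_i \in \operatorname{Im}(\alpha)$. Since $|\operatorname{Im}(\alpha)| \leq c < c+1$, pigeonhole gives at least one attractor component; symmetrically there is at least one repeller component, and the two are necessarily distinct (a single component cannot simultaneously be attractor and repeller, as this would force a neighborhood of it to lie in $K$, making the component clopen in $M$ and hence equal to $M$, contradicting $K \neq M$).

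For the final claim, if $K$ has exactly two components then $c = 1$ and the preceding argument exhausts both components: one is an attractor and the other a repeller, immediately yielding the attractor-repeller decomposition of $M$. The main technical delicacy is verifying that $\alpha$ and $\beta$ are single-valued (the local constancy argument); the remainder is a direct consequence of the exact sequence together with the non-saddle characterization.
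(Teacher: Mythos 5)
Your proof is correct, and its first half (the degree-$0$ portion of the exact sequence of Theorem \ref{thm:exact}, using that a section has $c$ components and that the resulting short exact sequence splits) is exactly the paper's argument. Where you diverge is in producing the attractor and the repeller. The paper's pigeonhole is combinatorial on the isolating block: $N$ has $c+1$ components while $N^o$ has only $c$, so some component $N_i$ is disjoint from $N^o$, hence positively invariant, hence isolates an attractor; dually with $N^i$. Your pigeonhole is dynamical: you push the count through the map $\alpha$ sending a component $C$ of $M\setminus K$ to the component of $K$ containing $\omega^*(x)$, and you use the non-saddle property to show that every non-attractor component lies in $\operatorname{Im}(\alpha)$. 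The two arguments are equivalent (in fact $\alpha(C)$ is precisely the component of $K$ isolated by the component of $N$ containing the section $N^o\cap C$), but yours carries an extra obligation that the paper's avoids: the well-definedness of $\alpha$. Your local-constancy argument as stated is slightly off, since $N^-_i$ need not have interior points in $M$ (consider the case where the relevant component is close to being an attractor); what is true, and what rescues the argument, is that $N^-\setminus K$ is open in $N\setminus K$ because $N=N^+\cup N^-$ with $N^+\cap N^-=K$ and both pieces closed, so for $T$ sufficiently negative a whole neighbourhood of $xT$ inside $\operatorname{int}N_i\setminus K$ lies in $N_i^-$. Alternatively, constancy of $\alpha$ on $C$ is immediate from the parallelization $C\cong S_C\times\mathbb{R}$ with $S_C$ a connected component of $N^o$, since $S_C\subset N^-_j$ for a single $j$. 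With that repair your argument is complete; the trade-off is that the paper's route reads the attractor directly off the block decomposition, while yours makes the dynamical meaning of the missing component of $\operatorname{Im}(\alpha)$ explicit, at the price of the well-definedness check.
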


\begin{proof} Let $N$ be an isolating block of $K$ of the form $N=N^+\cup N^-$. As mentioned earlier, $N^o$ is a section of $M \setminus K$ so it also has $c$ components. If $\check{H}^1(M)=0$ it follows from Theorem~\ref{thm:exact} that $\check{H}^0(K)\cong \check{H}^0(N^o)\oplus \mathbb{Z}$, whence $K$ has $c+1$ components. Observe that $K$ is not an attractor  (otherwise no homoclinic components would be possible and $K = M$, but we always assume $K \subsetneq M$) and so $N^o$ must be nonempty. It follows that $K$ has at least two connected components.

Since $K$ and $N$ have the same number of connected components and from $\check{H}^0(K) \cong \check{H}^o(N^o) \oplus \mathbb{Z}$ we see that $N^o$ has one component less than $K$, it follows that at least one component of $N$ is disjoint from $N^o$. That component of $N$ is therefore positively invariant and the component of $K$ isolated by it is an attractor. The same argument with $N^i$ instead of $N^o$ proves that there is another component of $K$ which is a repeller.
\end{proof}

The fact that the cup product in $\check{H}^*(M,K)$ is trivial becomes especially useful when the cup product in $H^*(M)$ is nondegenerate, because then the rank of the image of $\check{H}^*(M,K)$ in $H^*(M)$ is bounded above by half the rank of $H^*(M)$. The following example illustrates the simplest case of this sort of analysis:

\begin{example} Let $K$ be a global non-saddle set in $\mathbb{RP}^n$, the real projective space of dimension $n$. We claim that $\check{H}^k(K;\mathbb{Z}_2) \neq 0$ for every $1 \leq k \leq n/2$.

To prove this consider the exact sequence \[\cdots \longrightarrow \check{H}^k(\mathbb{RP}^n,K) \longrightarrow H^k(\mathbb{RP}^n) \longrightarrow \check{H}^k(K) \longrightarrow \cdots\] where we omit the $\mathbb{Z}_2$ coefficients from the notation. Recall that $H^*(\mathbb{RP}^n)$ is the truncated polynomial ring $\mathbb{Z}_2[\alpha]/(\alpha^{n+1})$, where $\alpha$ is a generator of $H^1(\mathbb{RP}^n)$. Assume the generator $\alpha^k$ of $H^k(\mathbb{RP}^2)$ is the image of some element $z \in \check{H}^k(\mathbb{RP}^2,K)$ under the inclusion induced homomorphism. Then $\alpha^{2k} = 0$ because it is the image of $z \cup z$, which is zero by the triviality of the cup product in $(\mathbb{RP}^2,K)$. However, $\alpha^{2k}$ is the generator of $H^{2k}(\mathbb{RP}^n)$ and hence nonzero since $2k \leq n$. Thus in fact the arrow $H^k(\mathbb{RP}^2,K) \longrightarrow H^k(\mathbb{RP}^2)$ is zero, which entails ${\rm rk}\ \check{H}^k(K) \geq 1$ for each $1 \leq k \leq n/2$.
\end{example}

The following result relates the Euler characteristic of $K$ with those of $M$ and $N^o$ and has very strong consequences in dimension two.

\begin{proposition}\label{prop:Euler} Let $K$ be a global non-saddle set in a closed, connected manifold $M$. Let $S$ be a section of $M \setminus K$.  Then,   
    \[
    \chi(K)=\begin{cases}
    \chi(M) & \mbox{if}\; n\;\mbox{is even}\\
    \chi(S) & \mbox{if}\; n\;\mbox{is odd},
    \end{cases}
    \]
    where $n$ is the dimension of the manifold $M$.
\end{proposition}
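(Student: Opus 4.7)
The plan is to extract the identity $\chi(K) = \chi(M) + \chi(S)$ from Theorem~\ref{thm:exact}, and then to kill one of the two terms on the right depending on the parity of $n$ by invoking Poincaré duality in the appropriate place.

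For the first step, I would use the homotopy equivalence $M/K \simeq \Sigma S \vee \mathbb{S}^1$ that is the main content of Theorem~\ref{thm:exact}. A standard computation with reduced Čech Euler characteristics gives $\tilde{\chi}(\Sigma S) = -\tilde{\chi}(S) = 1 - \chi(S)$ (from the suspension isomorphism) and $\tilde{\chi}(\mathbb{S}^1) = -1$, hence
\[
\tilde{\chi}(M/K) = \tilde{\chi}(\Sigma S) + \tilde{\chi}(\mathbb{S}^1) = -\chi(S).
\]
Strong excision identifies this with $\chi(M,K)$, and the long exact sequence of the pair $(M,K)$ gives $\chi(M,K) = \chi(M) - \chi(K)$, so rearranging yields $\chi(K) = \chi(M) + \chi(S)$. (The same equality can be read off directly by alternating sums over the exact sequence in Theorem~\ref{thm:exact}, which is perhaps a cleaner route to present.)

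For the second step, I would specialize to $S = N^o$, where $N = N^+ \cup N^-$ is an isolating block of the non-saddle set $K$ (as recalled at the start of Section~\ref{sec:nonsaddle1}). Because $K$ is non-saddle, $\partial N$ decomposes as the disjoint union $N^o \sqcup N^i$ without exterior tangencies, so $N^o$ is a \emph{closed} (boundaryless) $(n-1)$-manifold; since all sections of the parallelizable flow on $M \setminus K$ are mutually homeomorphic, $\chi(S) = \chi(N^o)$. If $n$ is even, $N^o$ is a closed manifold of odd dimension $n-1$, whence $\chi(S) = 0$ by Poincaré duality and $\chi(K) = \chi(M)$. If $n$ is odd, $M$ itself is a closed manifold of odd dimension, so $\chi(M) = 0$ by Poincaré duality and $\chi(K) = \chi(S)$.

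The main technical point to verify is that $N^o$ really is a boundaryless $(n-1)$-manifold, so that Poincaré duality applies to it; this is where the non-saddle hypothesis (allowing the choice $N = N^+ \cup N^-$ and ruling out shared ``corners'' between $N^o$ and $N^i$) combines with the existence of manifold-type isolating blocks recalled in the Background. The rest of the argument is essentially bookkeeping with Euler characteristics.
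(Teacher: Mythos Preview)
Your first step and the odd-$n$ case are exactly what the paper does: extract $\chi(K)=\chi(M)+\chi(S)$ from the exact sequence of Theorem~\ref{thm:exact}, then kill $\chi(M)$ via Poincar\'e duality in the closed odd-dimensional manifold $M$.

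The even-$n$ case, however, has a gap in the generality of the statement. You want to apply Poincar\'e duality to $N^{o}$, and for this you need $N^{o}$ to be a closed $(n-1)$-manifold. Your argument that $N=N^{+}\cup N^{-}$ forces $N^{o}\cap N^{i}=\emptyset$ (no exterior tangencies) is correct, so \emph{if} $N$ is a manifold isolating block then indeed $\partial N^{o}=N^{o}\cap N^{i}=\emptyset$ and you are done. But look again at the Background: manifold isolating blocks are only asserted to exist for $n\leq 3$ or for \emph{smooth} flows, whereas the paper works with continuous flows throughout. For a continuous flow on a manifold of dimension $n\geq 4$ there is no guarantee that any section $S$ of $M\setminus K$ is a manifold at all; one only knows $S\times\mathbb{R}\cong M\setminus K$, and there are classical examples of non-manifolds whose product with $\mathbb{R}$ is a manifold. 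So Poincar\'e duality in $S$ is not available in general.

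The paper circumvents this by applying Poincar\'e duality (with $\mathbb{Z}_{2}$ coefficients, to avoid orientability issues) in the open manifold $\hat{M}=M\setminus K$ itself: $H_{k}(\hat{M})\cong H^{n-k}_{c}(\hat{M})$. Compactly supported cohomology of $\hat{M}$ is then identified with $\check{H}^{*}(M/K,[K])$, and the computation from Theorem~\ref{thm:exact} converts this into $\check{H}^{n-k-1}(S)$. Combined with $H_{k}(\hat{M})\cong H_{k}(S)$ one obtains $H_{k}(S)\cong H^{n-k-1}(S)$ and hence $\chi(S)=0$, without ever needing $S$ to be a manifold. Your route is shorter and perfectly fine if you add a smoothness hypothesis or restrict to $n\leq 3$; for the full continuous setting you need the paper's detour through $H^{*}_{c}$.
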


\begin{proof}

    Through this proof we consider homology and cohomology with coefficients on $\mathbb{Z}_2$ in order to avoid considerations about orientability. (Recall that the Euler characteristic is independent of the choice of coefficients).
    
    Using the long exact sequence from Theorem~\ref{thm:exact} (which works also for field coefficients) it follows that 
    \[
    -\chi(S)=\chi(M)-\chi(K).
    \]
    Since $M$ is a closed manifold, Poincar\'e duality ensures that $\chi(M)=0$ whenever $n$ is odd and hence, we obtain the desired equality in this case.

    Suppose now that $n$ is even and set $\hat{M} = M \setminus K$. By Poincaré duality in the (open) $n$-manifold $\hat{M}$ we have $H_k(\hat{M}) \cong H^{n-k}_c(\hat{M})$, where $H^*_c$ denotes cohomology with compact supports. This cohomology is in turn isomorphic to $\check{H}^{n-k}(\hat{M}_{\infty},\infty)$, where $\hat{M}_{\infty}$ denotes the one-point compactification of $\hat{M}$. The reason is that a cohomology class in $\hat{M}$ has a compact support if and only if it extends to a cohomology class in $\hat{M}_{\infty}$ that vanishes on a neighbourhood of $\infty$ (a formal proof can be found in \cite[Corollary 12, p. 322]{Span}). Now, clearly $(\hat{M}_{\infty},\infty) \cong (M/K,[K])$ and by the computations of Theorem \ref{thm:exact} we have $H^{n-k}_c(\hat{M}) \cong \check{H}^{n-k}(\hat{M}_{\infty},\infty) \cong \check{H}^{n-k-1}(S)$. Finally, returning to Poincaré duality and bearing in mind that $\hat{M} \cong S \times \mathbb{R}$ we end up with \[H_k(S) \cong H_k(\hat{M}) \cong \check{H}^{n-k-1}(S) \cong H^{n-k-1}(S)\] where the last isomorphism owes to the fact that $S$ is an ANR because it is a retract of some open neighbourhood of itself. It then follows that $\chi(S) = 0$.
\end{proof}

Suppose that the phase space $M$ is a closed, connected $2$-manifold and $K \subset M$ is a global non-saddle set. Heuristically, we are mostly interested in the case when $K$ has the least possible number of components. Since $\chi(K) = \chi(M)$ is independent of $K$, such a $K$ will also have the smallest possible first Betti number, and so it will be the ``simplest'' possible from a homological point of view. The following results show that the (co)homology of $K$, together with orientability in some cases, characterizes $M$ completely.

\begin{theorem}
    Let $K$ be a global non-saddle set in a closed (and connected) $2$-manifold $M$. Then,
    \begin{enumerate}
        \item[(i)] If $M$ is the sphere $\mathbb{S}^2$ or the projective plane $\mathbb{RP}^2$, $K$ has at least two connected components. Moreover, if $K$ has exactly two connected components they form an attractor-repeller decomposition.

        \item[(ii)] If $K$ is connected and $M$ is orientable, then it is the connected sum of $\frac{1+\rank\check{H}^1(K)}{2}$ tori. (In particular ${\rm rk}\ \check{H}^1(K)$ is odd).

        \item [(iii)] If $K$ is connected and $M$ is nonorientable, then it is the connected sum of $1+\rank\check{H}^1(K)$ projective planes.
    \end{enumerate}
\end{theorem}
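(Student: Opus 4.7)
The plan is to reduce each of the three parts to results already established earlier in the paper, with the classification of closed surfaces doing the final bookkeeping in parts (ii) and (iii).

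For part (i), both $\mathbb{S}^2$ and $\mathbb{RP}^2$ satisfy $H^1(M;\mathbb{Z})=0$ (for $\mathbb{RP}^2$ because $H_1=\mathbb{Z}_2$ is torsion and universal coefficients kills $H^1$), so Corollary \ref{cor:1} applies. It says $K$ has $c+1$ components, where $c$ is the number of components of $M\setminus K$; since $K$ cannot be an attractor (otherwise $K=M$, contradicting the standing assumption $K\subsetneq M$), $N^o$ is nonempty, forcing $c\geq 1$ and hence $K$ to have at least two components. If $K$ has exactly two components then $c=1$; Corollary \ref{cor:1} guarantees that one component of $K$ is an attractor and another is a repeller, and with only two components these must be distinct and therefore constitute an attractor-repeller decomposition of $M$.

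For parts (ii) and (iii), the key input is Proposition \ref{prop:Euler}: since $n=2$ is even, $\chi(K)=\chi(M)$. The plan is to compute $\chi(K)$ in terms of $\operatorname{rk}\check{H}^1(K)$ and then compare with the Euler characteristic formulas from the classification of surfaces. Take an isolating block $N=N^+\cup N^-$ for $K$. Because $K\subsetneq M$ and $K$ is connected, $N$ is a connected compact $2$-manifold with nonempty boundary, hence homotopy equivalent to a wedge of circles, and in particular $H^2(N)=0$ and $\chi(N)=1-\operatorname{rk}H^1(N)$. Since the inclusion $K\hookrightarrow N$ induces an isomorphism in \v{C}ech cohomology, we obtain $\chi(K)=1-\operatorname{rk}\check{H}^1(K)$.

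Writing $r:=\operatorname{rk}\check{H}^1(K)$, the equality $\chi(M)=1-r$ combined with the surface classification immediately yields the claim: in the orientable case $\chi(M)=2-2g$, so $g=(1+r)/2$ (and in particular $r$ is automatically odd), while in the nonorientable case $\chi(M)=2-k$ with $k$ the number of $\mathbb{RP}^2$ summands, giving $k=1+r$. The main obstacle is essentially bookkeeping: one must justify that $\check{H}^2(K)=0$ and that $K$ has the homotopy type (in \v{C}ech cohomology) of a wedge of circles via the connected bounded $2$-manifold $N$. Everything else reduces cleanly to the already-proved Euler characteristic identity and the classification theorem for closed surfaces.
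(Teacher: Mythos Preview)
Your proof is correct and follows essentially the same route as the paper's: part (i) is a direct application of Corollary~\ref{cor:1}, and parts (ii)--(iii) combine the Euler characteristic identity $\chi(K)=\chi(M)$ from Proposition~\ref{prop:Euler} with the classification of surfaces. The only cosmetic differences are that the paper invokes $\mathbb{Q}$ coefficients for part (i) (whereas you correctly note that $H^1(\mathbb{RP}^2;\mathbb{Z})=0$ already, so integer coefficients suffice), and that you supply the extra justification for $\check{H}^2(K)=0$ via the connected bounded isolating block $N$, which the paper leaves implicit.
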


\begin{proof}
Case (i) follows directly from Corollary~\ref{cor:1} taking coefficients in $\mathbb{Q}$. %if $M=\mathbb{S}^2$. To prove it for $\mathbb{RP}^2$ notice that Proposition~\ref{prop:Euler} ensures that $\chi(K)=1$. If we assume that $K$ is connected it follows that $\rank\check{H}^1(K)=0$ and, hence, by \cite[Theorem~12]{BNLA} it has an isolating block $N=N^+\cup N^-$ that is a topological disc. Since $\partial N$ is connected and is contained either in $N^+$ or in $N^-$ it follows that $N$ is either positively or negatively invariant and, as a consequence, $K$ is either an attractor or a repeller. But since $K$ is compact this forces $K = M$, which we have discarded from the beginning.

%Suppose now that $K$ has two connected components. Then, since  $\chi(K)=1$, it follows that $\rank\check{H}^1(K)=1$ and so one of its components, say $K_1$, satisfies that $\rank\check{H}^1(K)=0$. Notice that $K_1$ must be non-saddle, and reasoning as before it follows that $K_1$ must be either an attractor or a repeller, being the other component its dual repeller or attractor.

To prove (ii) and (iii) it is sufficient to observe that since Corollary~\ref{cor:1} ensures that $\chi(M)=\chi(K)$ and $K$ is connected, it follows that $\chi(M)=1-\rank\check{H}^1(K)$. If $M$ is orientable of genus $g$ then $\chi(M) = 2 - 2g = 1 - {\rm rk}\ \check{H}^1(K)$ and so $g = (1+ {\rm rk}\ \check{H}^1(K))/2$; in particular, ${\rm rk}\ \check{H}^1(K)$ must be odd. If $M$ is nonorientable of genus $k$ then $\chi(M) = 2 - k = 1 - {\rm rk}\ \check{H}^1(K)$ and so $k = 1 + {\rm rk}\ \check{H}^1(K)$.
\end{proof}

\begin{corollary}[Dynamical characterization of closed surfaces] \label{cor:classification}
Let $M$ be a closed and connected $2$-manifold. We have the following mutually exclusive possibilities. 
\begin{enumerate}
\item[(i)] $M$ is $\mathbb{S}^2$ if and only if it admits a flow having a global non-saddle set that consists of two points.

\item[(ii)] $M$ is $\mathbb{RP}^2$ if and only if it admits a flow having a global non-saddle set that is the disjoint union of a circle and a point.

\item[(iii)] $M$ is the connected sum of $g\geq 1$ tori if and only if it is orientable and admits a flow having a global non-saddle set that is a wedge sum of $2g-1$ circles.

\item[(iv)] $M$ is the connected sum of $k \geq 2$ projective planes if and only if it is nonorientable and admits a flow having a global non-saddle set that is a wedge of $k-1$ circles.
\end{enumerate}

In each case the non-saddle set is the simplest possible: the one with the least number of connected components and the smallest first Betti number.
\end{corollary}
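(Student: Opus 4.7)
The plan is to treat each of (i)--(iv) as a separate biconditional, addressing the ``forward'' (topology $\Rightarrow$ existence of flow) and ``backward'' (existence of flow $\Rightarrow$ topology) implications in two packages. The backward direction will follow from the results already established, while the forward direction requires explicit flow constructions.

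For the backward implication I would apply Proposition~\ref{prop:Euler} and the theorem preceding the corollary. In (i) and (ii), $K$ is a disjoint union of points and circles, so $\chi(K) = 2$ and $1$ respectively; since $\mathbb{S}^2$ and $\mathbb{RP}^2$ are the unique closed connected surfaces with these Euler characteristics, Proposition~\ref{prop:Euler} forces $M$ to be $\mathbb{S}^2$ or $\mathbb{RP}^2$ respectively. In (iii) and (iv), $K$ is connected with $\rank \check{H}^1(K) = 2g-1$ and $k-1$ respectively, so parts (ii)--(iii) of the preceding theorem (together with the orientability hypothesis) directly identify $M$ as the stated connected sum.

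For the forward implication I would exhibit explicit flows. Case (i) is realized by the height-function gradient flow on $\mathbb{S}^2 \subset \mathbb{R}^3$, with $K = \{N,S\}$. For (ii), one takes on $\mathbb{S}^2$ a flow with two repelling poles and an attracting equator $C$; being equivariant under the antipodal map, it descends to $\mathbb{RP}^2$, where the non-saddle set is the disjoint union of the image of $C$ (a single circle, since the antipodal involution acts freely on $C$) and the image of $\{N,S\}$ (a single point). The base case $g = 1$ of (iii) is handled by the flow $\dot\theta = 0$, $\dot\phi = 1 - \cos 2\pi\phi$ on $\mathbb{T}^2 = (\mathbb{R}/\mathbb{Z})^2$: its fixed-point set is the circle $C = \mathbb{S}^1 \times \{0\}$, and a direct check of the definition (backward orbits of points with $\phi$ just above $0$, and forward orbits of points with $\phi$ just below $1$, each remain in arbitrarily small tubular neighborhoods of $C$) shows $C$ is a global non-saddle set. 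The base case $k = 2$ of (iv) is handled by the same formula on the Klein bottle $(\mathbb{R}/\mathbb{Z}) \times [0,1]/(x,0)\sim(-x,1)$, the twist identification being compatible because $\phi = 0$ is a circle of fixed points of the flow.

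For the remaining cases $g \geq 2$ and $k \geq 3$, I would proceed inductively, attaching a handle (resp.\ a crosscap) to a surface already carrying the required flow, at a point of the existing wedge where the ambient flow can be made locally standard, so that exactly two (resp.\ one) new circles of fixed points are wedged at that point. The main technical obstacle is the local verification at the wedge point: one must design the attaching flow so that (a) the enlarged invariant set is again non-saddle, (b) it is precisely the wedge of the new circles with the old invariant set, and (c) the complement remains a disjoint union of open annuli, guaranteeing globality. Granting this local lemma, the induction is routine; the final minimality assertion then follows from Proposition~\ref{prop:Euler}, which forces $\chi(K) = \chi(M)$ and hence bounds $\rank\check{H}^1(K)$ from below by exactly the value realized by the exhibited non-saddle sets.
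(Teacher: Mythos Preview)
Your backward implications and the base-case constructions for (i)--(iv) match the paper's, up to cosmetic variation (your $\mathbb{RP}^2$ flow descends from an equivariant $\mathbb{S}^2$ flow; the paper works on a disk model and identifies antipodal boundary points).

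The real divergence is in the higher-genus forward direction. You propose an induction by handle/crosscap attachment and correctly flag the local verification at the wedge point as the outstanding obstacle. The paper sidesteps this entirely with a one-shot global construction: for the connected sum of $k>2$ projective planes, write $M = S \cup A$ where $S$ is the connected sum of $k-2$ projective planes with two open disks removed and $A$ is an annulus glued along $\partial S$ (attaching a handle to a nonorientable surface being the same as attaching two crosscaps). An Euler-characteristic count shows $S$ is a regular neighbourhood of a wedge $K$ of $k-1$ circles, and since $S\setminus K \cong \partial S \times [0,\infty)$ consists of two half-open annuli capped by $A$, the complement $M\setminus K$ is a single open annulus. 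The flow is then simply: stationary on $K$, and otherwise crossing the annulus from one missing end to the other (suitably slowed near the ends). The orientable case is analogous and easier. Because $M\setminus K$ has exactly one component and it is visibly homoclinic, both globality and the non-saddle property are immediate---no local analysis near the wedge point is needed at all.

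Your inductive scheme is plausible and could likely be completed, but each step would require exactly the delicate matching of flows on a punctured neighbourhood of the attaching locus that you identified as the obstacle; the paper's approach buys you a proof in which that difficulty never arises.
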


\begin{proof} The ``if'' part of (iii) and (iv) follows from the preceding theorem; for parts (i) and (ii) it is a direct consequence of the formula $\chi(M) = \chi(K)$. For the ``only if'' part we need to check that the corresponding manifolds admit the claimed flows. 

(i) If $M$ is $\mathbb{S}^2$ it is sufficient to consider the gravitational north-south flow that has the poles as fixed points and all the other points flow downwards along the meridians.

(ii) Suppose $M$ is $\mathbb{RP}^2$. Let $\mathbb{D}^2$ be the closed unit disk in $\mathbb{R}^2$ and denote its center by $p$. Consider an outwards radial flow in $\mathbb{D}^2$ that is stationary on $p$ and $\partial \mathbb{D}^2$, so that $(\{p\},\partial \mathbb{D}^2)$ is an attractor-repeller decomposition of $\mathbb{D}^2$. By identifying antipodal points in $\partial \mathbb{D}^2$ we obtain a flow in $\mathbb{RP}^2$ that has the pair $(\{p\},C)$ as an attractor-repeller decomposition, where $C$ is the image of $\partial \mathbb{D}^2$ in $\mathbb{RP}^2$ (which is a projective line; hence a topological circle). Thus $\{p\} \cup C$ is a global non-saddle set.

(iii) and (iv) We construct a flow on the torus or the Klein bottle that has a circle as a global non-saddle set. Consider a flow on the compact annulus $\mathbb{S}^1\times[0,1]$ that is stationary on the boundary and otherwise flows radially across the annulus so that $(\mathbb{S}^1\times\{0\},\mathbb{S}^1\times\{1\})$ is an attractor-repeller decomposition of the annulus. Identify $S^1\times\{0\}$ with $S^1\times\{1\}$ by an orientation preserving circle homeomorphism to obtain the desired flow on the torus and by an orientation reversing one to obtain the desired flow on the Klein bottle.

To finish the proof we shall construct a flow on a connected sum $M$ of $k>2$ projective planes having a wedge of $k-1$ circles as a global non-saddle set. (The case of connected sums of tori is similar but easier). Start from a connected sum of $k-2$ projective planes $M_0$ and consider the surface (with boundary) $S$ obtained from $M_0$ after removing the interiors of two disjoint open disks. The surface $M$ can be obtained from $S$ by attaching an annulus $A$ onto the two boundary components of $\partial S$: indeed, this operation is the same as making a connected sum a torus, which for non-orientable surfaces is equivalent to making a connected sum with two projective planes. We thus have a decomposition $M = S \cup A$, where $S$ is a regular neighbourhood of a wedge of $k-1$ circles that we denote by $K$. Observe that $S \setminus K$ is homeomorphic to $\partial S \times [0,+\infty)$, which is a disjoint union of two half-open annuli because $\partial S$ has two connected components. Each of these components is then pasted to a boundary component of $A$, and so $M \setminus K = A \cup (S \setminus K)$ is an open annulus. The desired flow is then constructed by letting all points in $K$ be stationary and carrying every other point across the annulus from one missing boundary component to the other (the flow needs to be appropriately slowed down as it approaches the missing boundary).
\end{proof}

\section{Non-saddle decompositions} \label{sec:nonsaddle2}

It is well known that if $M$ is compact and $A\subset M$ is an attractor with basin of attraction $\mathcal{A}(A)$, then the complement $R=M\setminus\mathcal{A}$(A) is a repeller and the dynamics on $M$ admits an easy description since the orbit of every point outside $A\cup R$ must be a connecting orbit between the two. The pair $(A,R)$ is a so-called \emph{attractor-repeller decomposition} of $M$ and $R$ is said to be the \emph{dual repeller} of $A$.   

Assume that $K$ is a non-global isolated non-saddle set and $M$ is compact. Since its region of influence $\mathcal{I}(K)$ of $K$ is a proper and invariant open subset of $M$, it follows that $L=M\setminus\mathcal{I}(K)$ is a non-empty compact invariant set. It is easy to see that $L$ is isolated. A natural question would be to ask whether this situation resembles that of the attractor-repeller decompositions just described, that is, if $L$ is also non-saddle and the orbit of every point not in $K\cup L$ is a connecting orbit between the two. This is not the case in general as the flow depicted in Figure~\ref{fig:dis} shows. It is a flow on the $2$-torus, with $K$ a meridian of the torus. In this case, neither $L=\{p\}$ is non-saddle nor every point outside $K\cup L$ lies in a connecting orbit between $K$ and $L$.

\begin{figure}[h]
    \centering
    \includegraphics[width=0.5\linewidth]{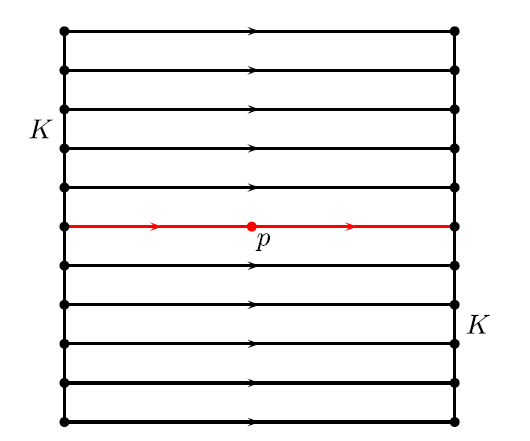}
    \caption{Flow on the torus.}
    \label{fig:dis}
\end{figure}

We shall say that a pair of isolated non-saddle sets $(K,L)$ is a \emph{non-saddle decomposition} of the compact metric space $M$ when the orbit of every point not in $K\cup L$ is a connecting orbit between $K$ and $L$. In this situation we say that $L$ is the \emph{dual non-saddle set} of $K$. The notion of non-saddle decompositions was introduced in \cite{GMRSo}. The following result shows necessary and sufficient conditions for the existence of the dual non-saddle set.

\begin{proposition}[Existence of non-saddle decompositions]\label{prop:dual}
    Suppose that $K$ is an isolated non-saddle set and $M$ is compact. The following are equivalent:
    \begin{enumerate}
        \item For every $x\in M\setminus K$, either $\omega(x)\cap K=\emptyset$ or $\omega^*(x)\cap K=\emptyset$; i.e. there are no orbits homoclinic to $K$.
        \item There exists an isolated non-saddle set $L\neq\emptyset$ such that the pair $(K,L)$ is a non-saddle decomposition.
    \end{enumerate}
\end{proposition}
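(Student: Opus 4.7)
The implication (2)$\Rightarrow$(1) is immediate: a point $x\in L$ has its whole orbit in $L$, so both $\omega(x)$ and $\omega^*(x)$ are contained in $L$ and hence disjoint from $K$; a point $x\in M\setminus(K\cup L)$ sits on a connecting orbit, so one of its limit sets lies in $K$ and the other in $L\subset M\setminus K$.

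For (1)$\Rightarrow$(2), the plan is to set $L:=M\setminus\mathcal{I}(K)$, which is a compact invariant subset of $M\setminus K$, and verify the required properties. If $L$ were empty, then $K$ would be a global non-saddle set on the closed manifold $M$, and the structural description recalled before Theorem~\ref{thm:exact} would force every point of $M\setminus K$ to be homoclinic, contradicting (1); hence $L\neq\emptyset$. Fixing an isolating block $N=N^+\cup N^-$ of $K$ with $K\subset{\rm int}(N)$ and setting $B:=M\setminus{\rm int}(N)$, one has $L\subset M\setminus N\subset{\rm int}(B)$ because $N\subset\mathcal{I}(K)$, and conversely any point whose full orbit stays in $B$ cannot have $\omega$- or $\omega^*$-limit meeting $K$ (such a limit would force the orbit into ${\rm int}(N)$), so it lies in $L$. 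Thus $L$ is the maximal invariant subset of $B$ and is isolated. Moreover, for any $x\in\mathcal{I}(K)\setminus K=M\setminus(K\cup L)$, say with $\omega(x)\subset K$ and $\omega^*(x)\cap K=\emptyset$ by (1), any $y\in\omega^*(x)\cap\mathcal{I}(K)$ would yield a nonempty subset $\omega(y)$ or $\omega^*(y)$ of $\omega^*(x)\cap K$, a contradiction; hence $\omega^*(x)\subset L$ and $x$ lies on a connecting orbit between $K$ and $L$.

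The crux is showing that $L$ is non-saddle. By the characterization recalled in Section~\ref{sec:nonsaddle1}, it suffices to produce an isolating block for $L$ of the form $B'^+\cup B'^-$, and the plan is to argue that $B$ itself works. For $x\in L$ the full orbit lies in $L\subset B$, so $x\in B^+\cap B^-$. For $x\in\mathcal{I}(K)\cap B$, hypothesis (1) allows us to assume $\omega(x)\subset K$ and $\omega^*(x)\cap K=\emptyset$, and the claim is then $x\in B^-$. If this failed, $x\cdot(-t)\in N$ for some $t>0$; since $N=N^+\cup N^-$, either $x\cdot(-t)\in N^-$, whence $\omega^*(x)$ is an invariant compact subset of $N$ and thus contained in $K$, contradicting (1); or $x\cdot(-t)\in N^+$, whence $x\cdot[-t,\infty)\subset N$, forcing $x\in\partial N$ (since $x\in B$) and, of the three boundary types, only the transverse-entry case $x\in N^i$ is compatible with $x\in N^+$, but then $x\cdot(-\epsilon,0)\cap N=\emptyset$ for small $\epsilon$, contradicting $x\cdot[-t,0]\subset N$. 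The symmetric argument places $\mathcal{R}(K)\cap B\subset B^+$, so $B=B^+\cup B^-$ and $L$ is non-saddle. The main obstacle is precisely this ruling out of orbit re-entries into $N$, which leverages both the decomposition $N=N^+\cup N^-$ and hypothesis (1) essentially.
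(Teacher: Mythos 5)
Your proof is correct and follows essentially the same route as the paper: both take $L=M\setminus\mathcal{I}(K)$, use $B=M\setminus{\rm int}(N)$ as an isolating block for $L$, and deduce $B=B^+\cup B^-$ from hypothesis (1) together with the decomposition $N=N^+\cup N^-$. The only (cosmetic) difference is that you rule out backward re-entry into $N$ by a direct case analysis on $N^+$, $N^-$ and the boundary types, whereas the paper invokes the identification $N_L^o=N^i\subset N^+$.
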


\begin{proof}
     (1) $\Rightarrow$ (2) Let $L=M\setminus\mathcal{I}(K)$. Then, $L$ is an isolated invariant set and it is non-empty since it must contain either the $\omega$ or the $\omega^*$-limit of every point $x\in M\setminus K$. We see that $L$ is non-saddle. Let $N$ be an isolating block of $K$ of the form $N=N^+\cup N^-$ and consider $N_L=M\setminus\mathring{N}$. It is clear that $N_L$ is an isolating block of $L$ satisfying $N_L^o=N^i$ and $N_L^i=N^o$. To see that $L$ is non-saddle it is sufficient to see that $N_L$ is of the form $N_L=N_L^+\cup N_L^-$. Let $x\in N_L\setminus L=N_L\cap\mathcal{I}(K)$ and assume that $\omega^*(x)\subset K$. Then $x\in N_L^+$ since otherwise there would be some $t\geq 0$ such that $xt\in N_L^o\subset N^+$, and hence $\omega(x)\subset K$. Since both $\omega^*(x)$ and $\omega(x)$ are nonempty, this contradicts (1). In an analogous way if $\omega(x)\subset K$ it follows that $x\in N_L^-$ and, hence, $L$ is non-saddle. The fact that the orbit of every point not in $K\cup L$ is a connecting orbit is straightforward. 

    (2) $\Rightarrow$ (1) Conversely, if $(K,L)$ is a non-saddle decomposition every point in $M\setminus K$ is either in $L$, and then its limit sets must be contained in $L$, or it belongs to a connecting orbit and hence one of its limit sets is contained in $K$ and the other in $L$. 
\end{proof}

If $(K,L)$ is a non-saddle decomposition of a manifold it is possible to establish homological relations between the non-saddle sets and the phase space as the following result shows. 

\begin{theorem}\label{thm:decomp}
    Let $(K,L)$ be a non-saddle decomposition of a closed, connected and orientable $n$-dimensional manifold $M$. Then there is an exact sequence
    \[
\cdots\longrightarrow \check{H}_k(L)\longrightarrow H_k(M)\longrightarrow \check{H}^{n-k}(K)\longrightarrow \check{H}_{k-1}(L)\longrightarrow\cdots
    \]
\end{theorem}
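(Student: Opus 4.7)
The plan is to derive the claimed sequence from the long exact sequence of a suitable pair in $M$, identifying two of the three terms via Lefschetz duality and a flow-induced shape equivalence respectively.

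First I would observe that because $(K,L)$ is a non-saddle decomposition, every point of $M\setminus(K\cup L)$ lies on an orbit connecting $K$ and $L$, so one of its limit sets is contained in $L$; together with $L\subset\mathcal{I}(L)$ and $K\cap\mathcal{I}(L)=\emptyset$ this gives
\[
M\setminus K=\mathcal{I}(L).
\]
Thus the long exact sequence of the pair $(M,\mathcal{I}(L))$ becomes
\[
\cdots\longrightarrow H_k(\mathcal{I}(L))\longrightarrow H_k(M)\longrightarrow H_k(M,M\setminus K)\longrightarrow H_{k-1}(\mathcal{I}(L))\longrightarrow\cdots
\]

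For the relative term I would invoke Lefschetz duality: since $M$ is a closed, connected, orientable $n$-manifold and $K\subset M$ is compact,
\[
H_k(M,M\setminus K)\cong \check{H}^{n-k}(K),
\]
producing the $\check{H}^{n-k}(K)$ term that appears in the statement.

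For the absolute term I would show $H_k(\mathcal{I}(L))\cong \check{H}_k(L)$ in two stages. Because $(K,L)$ is a non-saddle decomposition there are also no orbits homoclinic to $L$, so the structural description of the region of influence recalled in Section \ref{sec:nonsaddle1} guarantees that if $N_L=N_L^+\cup N_L^-$ is an isolating block of $L$, each component of $\mathcal{I}(L)\setminus L$ is parallelizable of attraction or repulsion type. Using the (finite) exit-time function on the attraction components and the corresponding entry-time function on the repulsion components, the flow provides a strong deformation retraction $\mathcal{I}(L)\to N_L$, whence $H_k(\mathcal{I}(L))\cong H_k(N_L)$. A second, infinite-time deformation along the flow on the basis of isolating blocks of the form $N_L^+\cup N_L^-$ (whose successive inclusions are homotopy equivalences, so the resulting inverse system is Mittag-Leffler) identifies $H_k(N_L)$ with $\check{H}_k(L)$ via continuity of \v{C}ech homology on the nested intersection $L=\bigcap_\alpha N_L^{(\alpha)}$.

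Splicing these two identifications into the long exact sequence gives the desired sequence. The step I expect to be the most delicate is the identification $H_k(\mathcal{I}(L))\cong\check{H}_k(L)$: the finite-time retraction onto $N_L$ must be assembled consistently across attraction and repulsion components (which is exactly where the \emph{absence} of homoclinic and dissonant orbits to $L$, guaranteed by the non-saddle decomposition hypothesis, is essential), and passing to \v{C}ech homology on $L$ requires the vanishing of the $\lim{}^1$ obstruction along the basis of isolating blocks. Lefschetz duality, by contrast, is a black-box input used once and for all.
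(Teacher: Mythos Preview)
Your proposal is correct and follows essentially the same route as the paper: the long exact sequence of $(M,M\setminus K)$, duality for the relative term, and a flow-induced deformation retraction of $M\setminus K=\mathcal{I}(L)$ onto an isolating block $N_L$ of $L$ followed by the identification $H_*(N_L)\cong\check{H}_*(L)$. The paper's only simplification is to take $N_L=M\setminus\mathring{N}$ for an isolating block $N=N^+\cup N^-$ of $K$, so that $\partial N_L$ is a global section of the parallelizable flow on $M\setminus(K\cup L)$ and the retraction is written in one piece rather than assembled across attraction and repulsion components.
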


\begin{proof}
    Consider the long exact sequence of singular homology of the pair $(M,M\setminus K)$
    \[
        \cdots\longrightarrow H_k(M\setminus K)\longrightarrow H_k(M)\longrightarrow H_k(M,M\setminus K)\longrightarrow H_{k-1}(M\setminus K)\longrightarrow\cdots
    \]

    By Alexander duality  we have that $H_k(M,M\setminus K)\cong\check{H}^{n-k}(K)$ so we only have to prove that $H_*(M\setminus K)\cong \check{H}_*(L)$. In order to see this notice that, as we have seen in the proof of Proposition~\ref{prop:dual}, if $N$ is an isolating block of $K$ of the form $N^+\cup N^-$ then $N_L=M\setminus\mathring{N}$ is an isolating block of $L$ of the form $N_L^+\cup N_L^-$. Hence $H_*(N_L)\cong \check{H}_*(L)$ and it suffices to prove that there is a strong deformation retraction of $M \setminus K$ onto $N_L$. Observe that $K\cup L$ is a global non-saddle set for $M$ so $M\setminus (K\cup L)$ is parallelizable and, since every trajectory intersects the local section $\partial N_L$, the latter must actually be a global section for the flow on $M \setminus (K \cup L)$. That is, for every $x\in M\setminus (K\cup L)$ there exists a unique $t_x$ such that $xt_x\in\partial N_L$; furthermore $t_x$ depends continuously on $x$. Consider the map $H:(M\setminus K)\times [0,1]\longrightarrow M\setminus K$ given by $H(x,s)=x(s\cdot t_x)$ if $x\in M\setminus (K\cup N_L)$ and $H(x,s)=x$ otherwise. This is a homotopy relative to $N_L$ between the identity and the map $r:M\setminus K\longrightarrow N_L$ given by $r(x)=xt_x$ if $x\in M\setminus (K\cup N_L)$ and $r(x)=x$ otherwise. Thus $r$ provides a strong deformation retraction between $M\setminus K$ and $N_L$ and so $H_*(M\setminus K)\cong H_*(N_L)\cong \check{H}_*(L)$ as required.
\end{proof}

\begin{corollary}
    Under the assumptions of Theorem~\ref{thm:decomp}, if $H_k(M)=H_{k-1}(M)=\{0\}$ then $\check{H}^{n-k}(K)\cong\check{H}_{k-1}(L)$ and, if $H_1(M)=\{0\}$, for instance, if $M=\mathbb{S}^n$ with $n\geq 2$, then $\check{H}_0(L)\cong\check{H}^{n-1}(K)\oplus\mathbb{Z}$.
\end{corollary}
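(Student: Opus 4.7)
The proof will be a direct application of the long exact sequence
\[
\cdots\longrightarrow \check{H}_k(L)\longrightarrow H_k(M)\longrightarrow \check{H}^{n-k}(K)\longrightarrow \check{H}_{k-1}(L)\longrightarrow H_{k-1}(M)\longrightarrow\cdots
\]
provided by Theorem~\ref{thm:decomp}. For the first assertion, I would simply substitute the hypothesis $H_k(M)=H_{k-1}(M)=0$ into the relevant four-term segment, which collapses to $0\to \check{H}^{n-k}(K)\to \check{H}_{k-1}(L)\to 0$, yielding the claimed isomorphism.

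For the second assertion, I would specialise the long exact sequence at $k=1$ and examine the tail
\[
\check{H}_1(L)\longrightarrow H_1(M)\longrightarrow \check{H}^{n-1}(K)\longrightarrow \check{H}_0(L)\longrightarrow H_0(M)\longrightarrow \check{H}^n(K)\longrightarrow 0.
\]
The hypothesis $H_1(M)=0$ makes the first map on $\check{H}^{n-1}(K)$ injective. The key auxiliary observation is that $\check{H}^n(K)=0$: tracing back through the proof of Theorem~\ref{thm:decomp}, this group is Alexander-dual to $H_0(M,M\setminus K)$, and since $M$ is connected and $K\subsetneq M$ is a proper compact subset, every component of $M$ meets $M\setminus K$, so $H_0(M\setminus K)\to H_0(M)$ is surjective and the relative $H_0$ vanishes. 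Combined with $H_0(M)\cong\mathbb{Z}$ (by connectedness of $M$), the tail reduces to a short exact sequence
\[
0\longrightarrow \check{H}^{n-1}(K)\longrightarrow \check{H}_0(L)\longrightarrow \mathbb{Z}\longrightarrow 0,
\]
which splits because $\mathbb{Z}$ is free, giving $\check{H}_0(L)\cong \check{H}^{n-1}(K)\oplus\mathbb{Z}$. The case $M=\mathbb{S}^n$ with $n\geq 2$ is then covered by the hypothesis $H_1(\mathbb{S}^n)=0$.

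Because the result is essentially a bookkeeping exercise with the already established exact sequence, there is no substantial obstacle; the only step that deserves a line of explicit justification is the vanishing $\check{H}^n(K)=0$, and for this the connectedness of $M$ together with $K\neq M$ (guaranteed implicitly by the non-saddle decomposition hypothesis, since both $K$ and $L$ are nonempty and disjoint) suffices.
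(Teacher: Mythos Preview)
Your argument is correct. The paper states this corollary without proof, treating it as an immediate consequence of the exact sequence in Theorem~\ref{thm:decomp}; your write-up simply spells out the bookkeeping, and your explicit justification that $\check{H}^n(K)\cong H_0(M,M\setminus K)=0$ (via connectedness of $M$ and $K\subsetneq M$) is exactly the one point worth recording.
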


\section{Continuations and bifurcations of non-saddle sets} \label{sec:bifurcations}

In this section we study robustness and bifurcation properties of isolated non-saddle sets. We start by recalling the basic notions of continuation theory (see \cite[Section~6]{Sal} for a detailed discussion). Let $\varphi_\lambda:M\times\mathbb{R}\longrightarrow M$ be a parametrized famility of flows depending continuously on a parameter $\lambda\in[0,1]$. The key observation for the theory of continuation is that being an isolating neighborhood is an open property. More precisely, suppose that $K_{\lambda_0}$ is an isolated invariant set for $\varphi_{\lambda_0}$ and $N_{\lambda_0}$ an isolating neighborhood for $K_{\lambda_0}$. Then there exists $\delta>0$ such that $N_{\lambda_0}$ is an isolating neighborhood for $\varphi_\lambda$ with $\lambda\in (\lambda_0-\delta,\lambda_0+\delta)\cap[0,1]$. For any such $\lambda$ we define $K_{\lambda}$ to be the maximal invariant subset in $N_{\lambda_0}$ for the flow $\varphi_{\lambda}$ and say that the family $(K_\lambda)$ of isolated invariant sets \emph{continues} or is a \emph{local continuation} of $K_{\lambda_0}$. An important observation is that if $N_1$ and $N_2$ are two isolating neighborhoods of $K_{\lambda_0}$ for $\varphi_{\lambda_0}$ then, the local continuations determined by these isolating neighborhoods coincide for $\lambda$ sufficienlty close to $\lambda_0$.

An interesting problem in continuation theory is to study under what circumstances a dynamical or topological feature of an isolated invariant set is preserved by local continuations. For instance, if $K_{\lambda_0}$ is an attractor, it continues to a family $(K_\lambda)$ of attractors for $\lambda$ close to $\lambda_0$ with the additional property that the \v Cech homologies and cohomologies of the $K_\lambda$ coincide with that of $K_0$ \cite[Theorem~4]{Sanjuni}. However, the property of non-saddleness is not preserved by continuations as illustrated in Figure \ref{fig:cont}, where the non-saddle circumference $K_0$ breaks up into an arc for $\lambda >0$ and is no longer non-saddle. The homological properties of $K_0$ are not preserved either. Actually, the preservation of the dynamical property of non-saddleness is strongly related to the preservation of some of its homological properties, at least for smooth dynamics (see \cite{Bdcds, BSdis, GSRo}). 

\begin{figure}[h]
    \centering
    \includegraphics[width=0.5\linewidth]{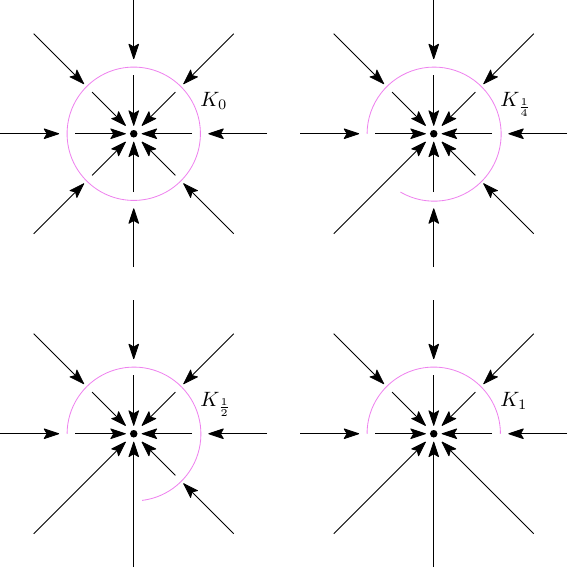}
    \caption{Continuations of an isolated non-saddle set may be saddle}
    \label{fig:cont}
\end{figure}

The following result describes a natural setup where a non-saddle set continues to non-saddle sets.

\begin{theorem} Suppose that $K$ is an invariant set for all $\lambda \in [0,1]$ and an isolated non-saddle set for $\lambda = 0$. Let $(K_{\lambda})$ be a continuation of $K$. Then there exists $\lambda_0 > 0$ such that $K_{\lambda}$ is an isolated non-saddle set for all $\lambda < \lambda_0$. Moreover, these $K_{\lambda}$ all satisfy $\check{H}^*(K_{\lambda}) \cong \check{H}^*(K)$.
\end{theorem}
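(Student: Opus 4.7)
My plan is to exploit the characterization that an isolated invariant set is non-saddle if and only if it admits an isolating block of the form $N=N^+\cup N^-$. Such a block $N$ exists for $K$ under $\varphi_0$ by \cite[Proposition~3]{BSdis}; appealing to \cite{ConEast} (or to \cite{chjde} in low dimensions) I may take $N$ to be a manifold with corners whose entry and exit sets $N^i$ and $N^o$ are transverse to the flow on $\partial N$. Openness of the transversality and isolating-block conditions yields $\lambda_0>0$ such that for every $\lambda<\lambda_0$, $N$ remains an isolating block for $\varphi_\lambda$ with the same $N^i$ and $N^o$, and the continuation is $K_\lambda=\mathrm{Inv}(N,\varphi_\lambda)=N_\lambda^+\cap N_\lambda^-$.

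The core of the argument is to show, after possibly shrinking $\lambda_0$, that $N=N_\lambda^+\cup N_\lambda^-$: then $K_\lambda$ is non-saddle by the same characterization, and the flow-induced \v Cech deformations of $N_\lambda^\pm$ onto $K_\lambda$ (and of $N_0^\pm$ onto $K$) give $\check{H}^*(K_\lambda)\cong H^*(N)\cong\check{H}^*(K)$ as recalled in the Background. I would argue by contradiction. Suppose there exist $\lambda_n\to 0$ and $x_n\in N$ that exit $N$ under $\varphi_{\lambda_n}$ both forward, at time $T_n>0$ through $y_n=x_n\cdot_{\lambda_n}T_n\in N^o$, and backward, at time $-S_n<0$ through $z_n=x_n\cdot_{\lambda_n}(-S_n)\in N^i$. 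Passing to subsequences, assume $x_n\to x$, $y_n\to y\in N^o$, $z_n\to z\in N^i$. When $T_n+S_n$ remains bounded, continuity of $\varphi_\lambda$ in $(x,\lambda)$ produces a $\varphi_0$-orbit segment from $z$ through $x$ to $y$ in finite time, so $z\in N^i$ is a point of $N$ whose $\varphi_0$-orbit exits $N$ in both forward and backward time, contradicting $N=N_0^+\cup N_0^-$.

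The delicate case, and the main obstacle, is $T_n+S_n\to\infty$. If, say, $T_n\to\infty$, then in the limit one only recovers $x\in N_0^+$, which a priori does not contradict the block form of $N$ for $\varphi_0$. To close the argument here I would choose by the basis property a strictly smaller isolating block $N'\subset\mathring{N}$ of $K$ under $\varphi_0$ of the same non-saddle form, and exploit the hypothesis that $K$ is invariant for \emph{every} $\varphi_\lambda$: this prevents new invariant pieces from splitting $K$ apart in a small neighborhood of it. The $\varphi_{\lambda_n}$-orbit of $x_n$ then tracks $\varphi_0$-orbits long enough to enter any prescribed neighborhood of $K$, and the fact that it ultimately leaves through $N^o$ (while $K$ itself remains stationary under $\varphi_{\lambda_n}$) forces in the limit a point of $\partial N'$ that is simultaneously an entry and an exit point for $\varphi_0$, again contradicting the non-saddle block form of $N'$. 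Making this nested-block compactness argument precise---and in particular quantifying how long $\varphi_{\lambda_n}$-orbits must shadow $\varphi_0$-orbits near $K$ using the invariance of $K$ across the family---is the technical heart of the proof.
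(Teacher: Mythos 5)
Your proposal correctly identifies the right framework (the block characterization of non-saddleness, perturbing a block $N=N^+\cup N^-$, and deducing the cohomology isomorphism from flow-induced deformations), but it has a genuine gap exactly where you locate the "technical heart", and the missing ingredient is not a shadowing/compactness refinement but a different observation. The paper's key step is this: because $N=N^+\cup N^-$ and $N^+\cap N^-=K$, the set $N\setminus K$ is the disjoint union of the two relatively clopen pieces $N^+\setminus K$ and $N^-\setminus K$; since $K$ is invariant for \emph{every} $\varphi_\lambda$, a $\varphi_\lambda$-orbit segment that stays in $N$ and starts in $N^+\setminus K$ is a connected subset of $N\setminus K$ and therefore can never reach $N^-\setminus K$. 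Hence such an orbit can only leave $N^+$ by leaving $N$ through $N^+\cap\partial N=N^i$, and a limiting argument using the transversality of $N^i$ for $\varphi_0$ (for $\varphi_0$ only) shows this is impossible for all small $\lambda$, uniformly over any compact $P\subset N^+$ \emph{disjoint from} $N^i$. This is precisely what rules out your unbounded-time case ($T_n+S_n\to\infty$): without it, nothing prevents a $\varphi_{\lambda_n}$-orbit from entering through $N^i$, lingering near $K$, and exiting through $N^o$. Your sketch (invariance of $K$ "prevents new invariant pieces from splitting $K$ apart", plus shadowing of $\varphi_0$-orbits) does not capture this mechanism and, as written, does not close that case.

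Second, the target you set yourself --- that the \emph{same} block $N$ satisfies $N=N_\lambda^+\cup N_\lambda^-$ --- is stronger than needed and is problematic precisely on $N^i\cup N^o$, where the lemma above gives no control (the hypothesis $P\cap N^i=\emptyset$ is essential: it is what forces the exit times $t_n$ to stay bounded away from $0$ in the limiting argument). The paper sidesteps this by taking a second non-saddle block $N_1\subset\mathring{N}$ and defining $B_\lambda$ as the closure of $\varphi_\lambda(N_1^+\times[0,+\infty))\cup\varphi_\lambda(N_1^-\times(-\infty,0])$, which by the lemma is contained in $N$, contains $N_1$, and is by construction of the form $B_\lambda^+\cup B_\lambda^-$; this saturated set, not $N$ itself, is the isolating neighbourhood witnessing non-saddleness of $K_\lambda$. (A further minor point: your claim that $N$ "remains an isolating block with the same $N^i$ and $N^o$" by openness of transversality is not justified for a merely continuous family of flows; the paper never needs transversality for $\varphi_\lambda$ with $\lambda>0$.) The cohomology statement then follows from the chain of inclusions $K\subset K_\lambda\subset N_1\subset B_\lambda$, which is close to what you propose.
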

\begin{proof} For $\lambda = 0$ fix an isolating block for $K$ with the form $N = N^+ \cup N^-$. The compact set $N^+$ is positively invariant for $\lambda = 0$. It may not be positively invariant for $\lambda > 0$, but the following holds: for any compact $P \subset N^+$ disjoint from $N^i$ there exists $\lambda_0$ such that $\varphi_{\lambda}(P \times [0,+\infty)) \subset N^+$ for every $\lambda < \lambda_0$. If this were not the case there would exist sequences $(x_n) \subset P$ and $\lambda_n \rightarrow 0$ such that the forward orbit of $x_n$ under $\varphi_{\lambda_n}$ is not entirely contained in $N^+$. Since $K$ is invariant for $\varphi_{\lambda_n}$, the forward orbit of $x_n$ cannot intersect it; moreover, $N^+ \setminus K$ and $N^- \setminus K$ are clopen subsets of $N \setminus K$ and so (by connectedness) the forward orbit of $x_n$ cannot go from $N^+ \setminus K$ to $N^- \setminus K$ without exiting $N$ first. Thus for the orbit of $x_n$ to escape $N^+$ it must do so through $N^+ \cap \partial N = N^i$. Let $t_n \geq 0$ be the first time the orbit of $x_n$ hits $N^i$, and also set $y_n$ to be the point at which this happens; that is, $y_n := \varphi_{\lambda_n}(x_n,t_n) \in N^i$. Passing to subsequences we may assume $x_n \rightarrow x \in P$ and $y_n \rightarrow y \in N^i$. If $t_n$ had a subsequence convergent to $0$ we would have $y = x$ but this is not possible because $x \in P$ which is disjoint from $N^i$. Hence $t_n \geq \epsilon > 0$ for some small $\epsilon$ and every $n$. Now for any $0 \leq t \leq \epsilon$ one would have $\varphi_0(y,-t) = \lim_n \varphi_{\lambda_n}(x_n,t_n-t) \in N^+$ because $t_n - t \geq 0$. This is not possible since $y$ is a transverse entry point into $N$ for $\lambda = 0$, and so $\varphi_0(y,(-\delta,0)) \cap N = \emptyset$ for small enough $\delta>0$.

Pick another isolating block $N_1$ for $K$, again with the form $N_1 = N_1^+ \cup N_1^-$, and contained in the interior of $N$. Let $K_{\lambda}$ be the continuation of $K$ determined by $N_1$. We are going to show that the $K_{\lambda}$ are non-saddle sets by finding isolating neighbourhoods $B_{\lambda}$ of the form $B_{\lambda}^+ \cup B_{\lambda}^-$ for them as follows. By the discussion above applied to $P = N_1^+$ there exists $\lambda_0 > 0$ such that $\varphi_{\lambda}(N_1^+ \times [0,+\infty)) \subset N^+$ and (since the argument works the same for compact subsets of $N^-$) also $\varphi_{\lambda}(N_1^- \times (-\infty,0]) \subset N^-$ for $\lambda < \lambda_0$. Set $B_{\lambda}$ to be the closure of $\varphi_{\lambda}(N_1^+ \times [0,+\infty)) \cup \varphi_{\lambda}(N_1^- \times (-\infty,0])$. This contains $N_1^+ \cup N_1^- = N_1$, and is contained in $N$. By construction $\varphi_{\lambda}(N_1^+ \times [0,+\infty))$ and $\varphi_{\lambda}(N_1^- \times [0,+\infty))$ are, respectively, positively and negatively invariant under $\varphi_{\lambda}$ and so $B_{\lambda} = B_{\lambda}^+ \cup B_{\lambda}^-$. Finally, for small enough $\lambda$ the maximal invariant sets of $N$ and $N_1$ coincide (namely, they are the $K_{\lambda}$) and so the same is true for $B_{\lambda}$. Thus the $K_{\lambda}$ are non-saddle sets.

To check that $\check{H}^*(K_{\lambda}) = \check{H}^*(K)$ consider the following chain of homomorphisms induced by the chain of inclusions $K\subset K_\lambda\subset N_1\subset B_\lambda$:
\[
\check{H}^*(B_{\lambda})\longrightarrow \check{H}^*(N_1)\longrightarrow \check H^*(K_\lambda)\longrightarrow \check{H}^*(K).
\]
Both inclusions $K_{\lambda} \subset B_{\lambda}$ and $K \subset N_1$ induce isomorphisms because of the form of $B_{\lambda}$ and $N_1$, and so the compositions of any two consecutive arrows is an isomorphism. It follows that all the arrows are isomorphisms and so $\check{H}^*(K) \cong\check{H}^*(K_\lambda)$ as we wanted to see. 
\end{proof}

We now analyze the setup of the preceding theorem from the perspective of bifurcation theory. Since $K$ is invariant for all $\lambda \in [0,1]$, the $K_{\lambda}$ contain $K$; i.e. during the continuation the set $K$ may grow larger and ``spill over'' into a component $U$ of $M \setminus K$. Assuming that $K$ retains its non-saddle nature along the whole continuation, the next theorem states that a family of invariant sets $S_{\lambda}$ is expelled by $K$ into $U$ and characterizes when these are non-saddle. When $U$ has several ends (i.e. connected components at infinity) corresponding to $K$ the set $S_{\lambda}$ may have several components, each arising from a different end. Moreover, since $K$ is non-saddle, ``as seen'' from an end of $U$ it must behave as an attractor or a repeller, but it may behave differently at different ends of $U$. To avoid the notational burden of having to distinguish ends of $U$ and components of $S_{\lambda}$ we shall require that $K$ be connected and $M$ be a closed, connected manifold with $H^1(M) = 0$. As shown in \cite[Theorem~25]{BSdis} this guarantees that: (i) if $N$ is an isolating block of $K$ of the form $N=N^+\cup N^-$, different components of $N\setminus K$ lie in different components of $M\setminus K$; (ii) for every component $U$ among the finitely many components of $M \setminus K$, the set $K$ is an attractor or a repeller for the restricted flow $\varphi|_{K \cup U}$. Intuitively, (i) implies that every component $U$ of $M \setminus K$ has only one end corresponding to $K$ and (ii) formalizes the idea that $K$ behaves as an attractor or a repeller as seen from the component $U$.

\begin{theorem}\label{thm:bifur} Assume that $M$ is a closed and connected manifold with $H^1(M) = 0$. Suppose that $K$ is connected and isolated non-saddle for each $\lambda\in [0,1]$. Let $U$ be a component of $M \setminus K$ such that $K_{\lambda} \cap U \neq \emptyset$ for every sufficiently small $\lambda$. Then:
    \begin{itemize}
        \item[(1)] There exists a family of isolated invariant sets $(S_\lambda)$ for $\varphi_\lambda$, each contained in $U$, that converge upper semicontinuosly to $K$ as $\lambda\to 0$.
        \item[(2)] Moreover, $S_\lambda$ is non-saddle if and only if $S_\lambda$ separates $U$. In particular, $S_\lambda$ is an attractor (repeller) whenever $K$ changes its stability with respect to $U$ from attracting (repelling) to repelling (attracting).
    \end{itemize}
\end{theorem}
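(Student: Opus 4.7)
The plan is to build $S_\lambda$ as a Conley dual inside the continuation $K_\lambda$, taking advantage of the attractor--repeller structure that $H^1(M)=0$ forces on $K$ relative to $U$. I invoke the two facts cited from \cite[Theorem~25]{BSdis}: distinct components of $N\setminus K$ (for an isolating block $N=N^+\cup N^-$ of $K$) sit in distinct components of $M\setminus K$, and on $U$ the set $K$ is either an attractor or a repeller for $\varphi_\lambda|_{K\cup U}$. Reversing time if necessary, I assume throughout that $K$ is an attractor relative to $U$.

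For (1) I propose
\[
S_\lambda:=\{x\in K_\lambda\cap\overline U:\omega^{*}_{\varphi_\lambda}(x)\cap K=\emptyset\}.
\]
Every orbit of $K_\lambda\cap\overline U$ outside $K$ lies in $U$ (first structural fact) and is forward-asymptotic to $K$ (attractor hypothesis), so $K$ plays the attractor role inside $K_\lambda\cap\overline U$ and $S_\lambda$ is its Conley dual repeller: compact, invariant, contained in $U$, and isolated in $M$ by trimming an isolating block of $K_\lambda$ near $K$. Non-emptiness when $K_\lambda\cap U\neq\emptyset$ is verified by ruling out that every orbit of $K_\lambda\cap U$ is simultaneously forward- and backward-asymptotic to $K$ through $U$: such homoclinic-in-$U$ orbits cannot survive inside a sufficiently small isolating block $N=N^+\cup N^-$ of the non-saddle set $K$, and one may shrink $N$ at will since continuations from nested isolating neighbourhoods agree locally. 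Upper semicontinuity $S_\lambda\to K$ as $\lambda\to 0$ is inherited from $S_\lambda\subset K_\lambda$ and the upper semicontinuity of the continuation.

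For (2), the component of $U\setminus S_\lambda$ containing the orbits of $K_\lambda\cap U$ always contributes a nonempty negatively invariant piece near $S_\lambda$, because inside $K_\lambda$ the flow moves from $S_\lambda$ toward $K$. If $S_\lambda$ also separates $U$, the remaining components of $U\setminus S_\lambda$ consist---by the parallelizability of $\varphi_\lambda$ on $U\setminus K_\lambda$, a region that cannot sustain any bounded invariant set distinct from those already inside $K_\lambda$---of trajectories trapped on one side of $S_\lambda$ in forward or backward time, providing the complementary piece of an isolating block of the form $B_S^+\cup B_S^-$ and hence the non-saddle property. Conversely, if $S_\lambda$ is non-saddle, such a block exhibits $S_\lambda$'s influence region on two ``sides'' inside $U$ that correspond to distinct components of $U\setminus S_\lambda$, giving the separation. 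The ``in particular'' assertion then follows because changing $K$'s stability with respect to $U$ swaps the roles of attractor and repeller inside $K_\lambda\cap\overline U$: when $K$ becomes the repeller there, $S_\lambda$ becomes the attractor inside $K_\lambda$, and the separation hypothesis upgrades this into a genuine attractor of $M$.

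The main obstacle I anticipate is the first direction of the equivalence in (2): extracting from the purely topological hypothesis of separation a genuine isolating block $B_S^+\cup B_S^-$ for $S_\lambda$. I expect this to require a careful interplay between the parallelizability of $\varphi_\lambda$ on $U\setminus K_\lambda$, the boundary structure of blocks of $K_\lambda$, and a regular-neighbourhood argument that converts the topological decomposition of $U\setminus S_\lambda$ into a dynamical one around $S_\lambda$.
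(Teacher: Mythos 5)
Your construction in part (1) is essentially the paper's: $S_\lambda$ is the Conley dual of $\partial U=K\cap\overline U$ inside the compact invariant set $K_\lambda\cap\overline U$, and isolation and upper semicontinuity come along for free. One slip: with $K$ attracting in $U$, the dual repeller is $\{x\in K_\lambda\cap\overline U:\omega_{\varphi_\lambda}(x)\cap K=\emptyset\}$, not the set you wrote with $\omega^*$ — as written your $S_\lambda$ equals all of $(K_\lambda\cap\overline U)\setminus K$ (every connecting orbit has backward limit avoiding the attractor), which is not compact. This is cosmetic, since your prose makes clear you mean the dual repeller.

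Part (2) is where the real gap lies, and it is not just the direction you flag. The missing idea is the attractor $A'_\lambda$ in $\overline U$ dual to the repeller $R_\lambda=K_\lambda\cap\overline U$ for $\varphi_\lambda|_{\overline U}$. Its existence immediately falsifies your claim that $U\setminus K_\lambda$ "cannot sustain any bounded invariant set distinct from those already inside $K_\lambda$": $A'_\lambda$ is a nonempty compact invariant subset of $U$ disjoint from $K_\lambda$, so $U\setminus K_\lambda$ is not parallelizable and your block-building sketch for "separates $\Rightarrow$ non-saddle" collapses. The paper instead writes $U\setminus S_\lambda=(\mathcal{R}_\lambda(\partial U)\cup\mathcal{A}_\lambda(A'_\lambda))\cap U$ and proves that $S_\lambda$ is non-saddle if and only if these two open basins are disjoint; separation then gives disjointness because $\partial U$ is connected, forcing $A'_\lambda$ into a component of $U\setminus S_\lambda$ with closure in $U$, and non-saddleness is concluded by citing \cite[Theorem~1]{BSbif} rather than by exhibiting a block. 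Your converse ("a block $B^+\cup B^-$ exhibits two sides in distinct components") is also not a proof: non-saddleness alone never forces separation (think of a non-saddle meridian circle), and you never use $H^1(M)=0$ in part (2). The paper needs it precisely here, to show $A'_\lambda$ does not separate its basin, hence that the boundary of an isolating block of $A'_\lambda$ is a \emph{connected} section of $U\setminus(A'_\lambda\cup R_\lambda)$; only then does the dichotomy "$\partial P\subset\mathcal{R}_\lambda(\partial U)\cap\mathcal{A}_\lambda(A'_\lambda)$ or $\partial P\subset\mathcal{I}_\lambda(S_\lambda)$" yield the disjointness of the basins, i.e.\ the separation. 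Without $A'_\lambda$ and this connectivity argument, both implications in (2) remain unproved.
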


%\begin{theorem}\label{thm:bifur}
 %   Suppose that $K$ \textcolor{red}{Hay que pedir conexión} is isolated non-saddle for each $\lambda\in [0,1]$. Then, there exists $\lambda_0>0$ such that $K$ continues to a family of  isolated non-saddle sets $(K_\lambda)$ for $\lambda<\lambda_0$ with $\check{H}^*(K_\lambda)\cong\check{H}^*(K)$ satisfying that for each component $U$ of $M\setminus K$ with $K_\lambda\cap U\neq\emptyset$ for each $\lambda$ sufficiently small there exists a family of isolated invariant sets $(S_\lambda)$ for $\varphi_\lambda$, each contained in $U$, that converge upper semicontinuosly to $K$ as $\lambda\to 0$. Moreover, $S_\lambda$ is non-saddle if and only if $S_\lambda$ separates $U$. In particular, $S_\lambda$ is an attractor (repeller) whenever $K$ changes its stability with respect $U$ from attracting (repelling) to repelling (attracting).
%\end{theorem}

\begin{proof}

Since $K$ is non-saddle for every $\lambda$, we have that if we fix $\lambda$, $\partial U$ is either an attractor or a repeller for $\varphi_\lambda|_{\overline{U}}$. Suppose that $\partial U$ is a repeller for $\lambda=0$ and for the fixed $\lambda$, i.e. there is no change of stability. We call in this case $R_\lambda=K_\lambda\cap\overline{U}$. Since $\partial U$ is a repeller for this $\lambda$, it is in particular a repeller for the restriction flow $\varphi_\lambda|_{R_\lambda}$. Then, the claimed $S_\lambda$ is the dual attractor of $\partial U$ for $\varphi_\lambda|_{R_\lambda}$.

Let us see that $S_\lambda$ is non-saddle if and only if it separates $U$. Since $\overline{U}$ is compact, then there is an attractor $A'_\lambda$ for $\varphi_\lambda|_{\overline{U}}$ dual to $R_\lambda$. Notice that if we choose $\lambda$ sufficiently small, $A'_\lambda$ is connected being a continuation of the dual attractor of $\partial U$ for $\varphi_0|_{\overline{U}}$. 

Suppose that $S_\lambda$ is non-saddle and see that $W:=\mathcal{R}_\lambda(\partial U)\cap\mathcal{A}_\lambda(A'_\lambda)=\emptyset$. Since $U\setminus S_\lambda=(\mathcal{R}_\lambda(\partial U)\cup\mathcal{A}_\lambda(A'_\lambda))\cap U$ and both basins are open sets, the result follows. Let $P$ be an isolating block for $A'_{\lambda}$. Since $H^1(M)=0$ it follows that $A'_{\lambda}$ does not separate its basin of attraction $U\setminus R_\lambda$. Otherwise there would be a component of $M\setminus A'_\lambda$ whose closure is contained in $U\setminus R_\lambda$, contradicting that $A'_\lambda$ is the dual attractor of $R_\lambda$ for $\varphi_\lambda|_{\overline{U}}$. As a consequence $\partial P$ is connected being a section of the restriction of $\varphi_\lambda$ to the connected set $U\setminus (A'_\lambda\cup R_\lambda)$. Notice that every point in $\partial P$ either connects $A'_\lambda$ with $S_\lambda$ or with $\partial U$. It follows that,
\[
\partial P=(\partial P\cap W) \cup (\partial P\cap\mathcal{I}_\lambda(S_\lambda)).
\]
Since $S_\lambda$ is non-saddle, it follows that $\mathcal{I}_\lambda(S_\lambda)$ is open, and, being $W$ and $\mathcal{I}_\lambda(S_\lambda)$ disjoint  open sets it follows that $\partial P$ is either contained in $W$ or in $\mathcal{I}_\lambda(S_\lambda)$. Notice that the first possibility must be excluded, because otherwise $\mathcal{I}_\lambda(S_\lambda)$ must be contained in $R_\lambda$ and, $\partial \mathcal{I}_\lambda(S_\lambda)\subset R_\lambda$ would be a compact invariant set disjoint from $S_\lambda$ and, hence, would be contained in $\partial U$, thus, since $U$ is connected, $U=\mathcal{I}_\lambda(S_\lambda)$ which is not possible $A'_\lambda \subset U$ being non-empty and disjoint with $\mathcal{I}_\lambda(S_\lambda)$. Hence, $\partial P\subset\mathcal{I}_\lambda(S_\lambda)$ which ensures that $W=\emptyset$, since any trajectory connecting $\partial U$ and $A'_\lambda$ must intersect $\partial P$.

Conversely, suppose that $S_\lambda$ separates $U$. Since $\partial U$  is connected, one of the components of $U\setminus S_\lambda$ must have its closure contained in $U$. Thus, $A'_\lambda$ must lie in this component since, otherwise there would be a point not in $A'_\lambda\cup R_\lambda$, whose trajectory is not a connecting orbit between them, contradicting that they form an attractor-repeller decomposition. As a consequence, $\mathcal{R}_\lambda(\partial V)\cap\mathcal{A}_\lambda(A'_\lambda)=\emptyset$ and, hence, by \cite[Theorem~1]{BSbif} $S_\lambda$ is non-saddle.

Suppose that $K$ changes its stability form attracting to repelling with respect to $U$. In such a case, $K_\lambda$ is an attractor so is $S_\lambda$ being an attractor for the restriction of $\varphi_\lambda$ to an attractor.

The remaining cases are completely analogous. 
\end{proof}

The next corollary is formulated in terms of the Borsuk homotopy type (or ``shape type''). This was introduced by Borsuk (\cite{Bormono}) as a modification of the usual homotopy type and coincides with it on spaces having a nice local topology, such as manifolds. On compact spaces with bad local properties the Borsuk homotopy type is often more useful than the usual one because it overlooks the bad local structure; for instance, the Warsaw circle and $\mathbb{S}^1$ have the same Borsuk homotopy type. Details about this theory can be found in \cite{DySe}, \cite{MarSe}.

\begin{corollary}
    Under the assumptions of Theorem~\ref{thm:bifur}, assume $U$ is contractible. Then $S_\lambda$ is non-saddle if and only if $\check{H}^*(S_\lambda)\cong \check{H}^*(\mathbb{S}^{n-1})$. In particular, if $M=\mathbb{S}^2$ then $S_\lambda$ is non-saddle if and only if it has the Borsuk's homotopy type of $\mathbb{S}^1$. Otherwise it has the Borsuk homotopy type of a totally disconnected compactum. 
\end{corollary}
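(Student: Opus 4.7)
The strategy is to combine Alexander duality in the contractible oriented open $n$-manifold $U$ with Theorem~\ref{thm:bifur}(2). Lefschetz duality in $U$ together with the long exact sequence of $(U, U \setminus C)$ gives, for any compact $C \subset U$, the identifications $\check H^q(C) \cong H_{n-q-1}(U \setminus C)$ for $0 \le q \le n-2$, $\check H^{n-1}(C) \cong \tilde H_0(U \setminus C)$, and $\check H^n(C) = 0$. Theorem~\ref{thm:bifur}(2) identifies non-saddleness of $S_\lambda$ with $S_\lambda$ separating $U$. The reverse implication of the corollary is then immediate: $\check H^*(S_\lambda) \cong \check H^*(\mathbb S^{n-1})$ forces $\check H^{n-1}(S_\lambda) = \mathbb Z \ne 0$, so $\tilde H_0(U \setminus S_\lambda) \ne 0$, so $S_\lambda$ separates $U$ and is therefore non-saddle.

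For the forward direction I would use the decomposition $U \setminus S_\lambda = V_{\mathrm{in}} \sqcup V_{\mathrm{out}}$ from the proof of Theorem~\ref{thm:bifur}, where $V_{\mathrm{in}} = \mathcal{A}_\lambda(A'_\lambda) \cap U$ is the basin of the dual attractor and $V_{\mathrm{out}} = (R_\lambda \cap U) \setminus S_\lambda$ is the outer component made up of connecting orbits from $\partial U$ to $S_\lambda$. Since at $\lambda = 0$ the basin $\mathcal{A}_0(A'_0) = U$ is contractible, $A'_0$ is shape-trivial, and by the continuation invariance of \v{C}ech cohomology for attractors so is every $A'_\lambda$; hence $V_{\mathrm{in}}$ has the shape of a point, and in particular $H_k(V_{\mathrm{in}}) = 0$ for $k \ge 1$. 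By parallelizability of the flow on $R_\lambda \setminus (\partial U \cup S_\lambda)$, one has $V_{\mathrm{out}} \cong \sigma \times \mathbb{R}$ for a global section $\sigma$, and via the asymptotic flow $\sigma$ is shape-equivalent both to $\partial U$ and to $S_\lambda$; thus $V_{\mathrm{out}}$ has the \v{C}ech cohomology of $\partial U$. The remaining ingredient is that $\partial U$ itself has the \v{C}ech cohomology of $\mathbb S^{n-1}$, which I would derive at $\lambda = 0$ from the attractor--repeller pair $(A'_0, \partial U)$ in $\overline U$, using that $\overline U$ is shape-equivalent to the shape-trivial $A'_0$ (the positively invariant block $N^+$ retracts onto $A'_0$) together with $\check H^q(\overline U, \partial U) \cong \tilde H^q(\overline U / \partial U) \cong \tilde H^q(U^+) \cong \mathbb Z \cdot \delta_{q, n}$, the last isomorphism being Poincaré duality for the contractible oriented open $n$-manifold $U$. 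Plugging this back into the Alexander duality formulas above yields $\check H^0(S_\lambda) = H_{n-1}(V_{\mathrm{out}}) = \mathbb Z$, $\check H^q(S_\lambda) = H_{n-q-1}(V_{\mathrm{out}}) = 0$ for $1 \le q \le n-2$, and $\check H^{n-1}(S_\lambda) = \tilde H_0(V_{\mathrm{in}} \sqcup V_{\mathrm{out}}) = \mathbb Z$, giving $\check H^*(S_\lambda) \cong \check H^*(\mathbb S^{n-1})$.

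For the specialization to $M = \mathbb S^2$, the rest is shape theory for planar continua: $\check H^*(S_\lambda) \cong \check H^*(\mathbb S^1)$ determines $S_\lambda$ up to Borsuk homotopy type, giving the shape of $\mathbb S^1$, while $\check H^1(S_\lambda) = 0$ forces each shape-component of $S_\lambda$ to be cellular (shape of a point), so $S_\lambda$ has the Borsuk shape type of a totally disconnected compactum. The main obstacle I foresee is cleanly establishing that $\partial U$ has the \v{C}ech cohomology of $\mathbb S^{n-1}$: both the identification $\overline U / \partial U \simeq U^+$ and the shape triviality $\check H^*(\overline U) \cong \check H^*(\mathrm{pt})$ rely on using the isolating-block structure $N = N^+ \cup N^-$ of $K$ at $\lambda = 0$ to produce a collar of $\partial U$ in $\overline U$ and to identify $\overline U$ with the closure of the basin of $A'_0$; this is the most delicate ingredient in the argument and requires care because $\overline U$ need not be a manifold with boundary in an a priori sense.
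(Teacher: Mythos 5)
Your Alexander duality setup and the reverse implication coincide with the paper's argument, and your treatment of the inner piece $\mathcal{A}_\lambda(A'_\lambda)$ (continuation invariance of the \v{C}ech cohomology of attractors, basin shape equivalent to the attractor) also matches. The gap is in the outer piece $V_{\mathrm{out}}$. Two of your intermediate claims are false, already in the simplest example: take $M=\mathbb{S}^2$ and $K=\{p\}$ a fixed point undergoing a Hopf-type change of stability, so that $U=\mathbb{S}^2\setminus\{p\}$ and $S_\lambda$ is a circle expelled from $p$. Then $\partial U=\{p\}$ does \emph{not} have the \v{C}ech cohomology of $\mathbb{S}^{n-1}=\mathbb{S}^1$, and the section $\sigma$ (a small circle around $p$) is \emph{not} shape equivalent to $\partial U$. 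The flow only yields shape equivalences from sets of the form $\sigma\cdot[0,+\infty)\cup S_\lambda$ onto $S_\lambda$ (resp. $\sigma\cdot(-\infty,0]\cup\partial U$ onto $\partial U$); a compact section of the region between an attractor--repeller pair is in general shape equivalent to neither end. Likewise $\overline{U}$ is not shape equivalent to $A'_0$ (in the example $\overline{U}=\mathbb{S}^2$); only the basin $U$ is. In this example your two false steps happen to cancel and still output $H_*(V_{\mathrm{out}})\cong H_*(\mathbb{S}^{n-1})$, but the derivation does not stand, and this is not a matter of "care" with collars: the claims themselves are wrong.

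The paper bypasses $\partial U$ and $\sigma$ entirely by applying Alexander duality a second time. The outer piece is $\mathcal{R}_\lambda(\partial U)\setminus\partial U=U\setminus\hat{A}_\lambda$, where $\hat{A}_\lambda$ is the attractor dual to the repeller $\partial U$ for $\varphi_\lambda|_{\overline{U}}$. Since the basin of $\hat{A}_\lambda$ is all of $U$, which is contractible, $\hat{A}_\lambda$ has the \v{C}ech cohomology of a point, and duality in $U$ gives $\widetilde{H}_{n-k-1}(U\setminus\hat{A}_\lambda)\cong\check{H}^k(\hat{A}_\lambda)$, so the outer piece has the homology of $\mathbb{S}^{n-1}$ directly. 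Replacing your $\sigma$--$\partial U$ argument by this second duality computation repairs the forward implication; the remainder of your writeup, including the $\mathbb{S}^2$ specialization via the shape classification of continua in the sphere, agrees with the paper.
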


\begin{proof} Since $U$ is contractible it is orientable and we can apply Alexander duality with integer coefficients. Then,
\[
\check{H}^k(S_\lambda)\cong H_{n-k}(U,U\setminus S_\lambda)\cong\widetilde{H}_{n-k-1}(U\setminus S_\lambda)
\]
where the last isomorphism comes from the long exact sequence of the pair $(U,U\setminus S_\lambda)$.

Suppose that $S_\lambda$ is non-saddle and assume, without loss of generality, that $\partial U$ is a repeller for $\varphi_\lambda$. Hence, as we have seen in the proof of Theorem~\ref{thm:bifur}, $U\setminus S_\lambda$ is the disjoint union of $\mathcal{R}_\lambda(\partial U) \setminus \partial U$ and $\mathcal{A}_\lambda(A'_\lambda)$. Notice that $\mathcal{A}_\lambda(A'_\lambda)$ is contractible because it is the basin of attraction of an attractor that continues the dual attractor $A$ of $\partial U$ for $\varphi_0|_{\overline{U}}$. In addition, $\mathcal{R}_\lambda(\partial U) \setminus \partial U$ is the complement in $U$ the dual attractor $\hat{A}_\lambda$ of $\partial U$ for $\varphi_\lambda|_{\overline{U}}$. Since the basin of attraction of $\hat{A}_\lambda$ is $U$ it follows that $\check{H}^*(\hat{A}_\lambda)\cong H^*(U)\cong H^*(\{*\})$.

Suppose that $S_\lambda$ is non-saddle and assume, without loss of generality, that $\partial U$ is a repeller for $\varphi_\lambda$. Hence, as we have seen in the proof of Theorem~\ref{thm:bifur}, $U\setminus S_\lambda$ is the disjoint union of $\mathcal{R}_\lambda(\partial U)\setminus \partial U$ and $\mathcal{A}_\lambda(A'_\lambda)$. We compute the homology of these two sets as follows. Recall that $A'_{\lambda}$ is a continuation of $A$, the attractor dual to the repeller $\partial U$ for $\varphi_0|_{\overline{U}}$. Since the basin of attraction of $A$ is $U$, which is contractible, it has the cohomology of a point and the same is true of its continuation $A'_{\lambda}$. Therefore $\mathcal{A}_{\lambda}(A'_{\lambda})$ also has the (co)homology of a point. As for $\mathcal{R}_\lambda(\partial U)\setminus \partial U$, it is the complement in $U$ of the dual attractor $\hat{A}_\lambda$ of $\partial U$ for $\varphi_\lambda|_{\overline{U}}$. Since the basin of attraction of $\hat{A}_\lambda$ is $U$, once again $\hat{A}_{\lambda}$ has the cohomology of a point and then by Alexander duality \[\check{H}^k(\{*\}) \cong 
\check{H}^k(\hat{A}_\lambda)\cong \widetilde{H}_{n-k}(U,\mathcal{R}_\lambda(\partial U)\setminus \partial U)\cong\widetilde{H}_{n-k-1}(\mathcal{R}_\lambda(\partial U)\setminus \partial U).
\] so $\mathcal{R}_{\lambda}(\partial U)\setminus \partial U$ has the homology of $\mathbb{S}^{n-1}$.

It follows that $\mathcal{R}_{\lambda}(\partial U)\setminus \partial U$ is connected and so is $\mathcal{A}_{\lambda}(A'_{\lambda})$ because it has the homology of a point. Thus $U \setminus S_{\lambda} = (\mathcal{R}_{\lambda}(\partial U)\setminus \partial U) \cup \mathcal{A}_{\lambda}(A'_{\lambda}))$ has two connected components and 
\[
\check{H}^{n-1}(S_\lambda)\cong\widetilde{H}_0(U\setminus S_\lambda) \cong\mathbb{Z}.
\]
For $k\neq n-1$, 
\begin{multline*}
\check{H}^k(S_\lambda)\cong\widetilde{H}_{n-k-1}(U\setminus S_\lambda)\cong H_{n-k-1}(U\setminus S_\lambda)\\
\cong  H_{n-k-1}(\mathcal{R}_\lambda(\partial U)\setminus \partial U)\oplus H_{n-k-1}(\mathcal{A}_\lambda(A'_\lambda))\cong \check{H}^k(A'_\lambda).
\end{multline*}
Therefore $\check{H}^*(S_\lambda)\cong\check{H}^*(\mathbb{S}^{n-1})$, as claimed.

Conversely, if $S_\lambda$ satisfies that $\check{H}^*(S_\lambda)\cong \check{H}^*(\mathbb{S}^{n-1})$, Alexander duality ensures that
\[
\widetilde{H}_0(U\setminus S_\lambda)\cong\check{H}^{n-1}(S_\lambda)\cong\mathbb{Z}.
\]
Hence $S_\lambda$ separates $U$ and, by Theorem~\ref{thm:bifur}, it follows that $S_\lambda$ is non-saddle.

The second part of the statement follows from the fact that, if $M=\mathbb{S}^2$, then each component of $M\setminus K$ is contractible so $S_\lambda$ is non-saddle if and only if $\check{H}^*(S_\lambda)\cong\check{H}^*(\mathbb{S}^1)$. For subcompacta of the $2$-sphere this is equivalent to having Borsuk's homotopy type of $\mathbb{S}^1$. Notice that if $S_\lambda$ is saddle then $U\setminus S_\lambda$ is connected and, hence, so is $\mathbb{S}^2\setminus S_\lambda$. Therefore, each component of $S_\lambda$ must have the Borsuk's homotopy type of a point.
\end{proof}

\section{$W$-sets} \label{sec:Wsets1}

In this section we generalize the notion of a non-saddle set to encompass a bigger class of isolated invariant sets. We also obtain some relationships for this new class that generalize the classical Morse inequalities for flows defined on $2$-dimensional manifolds. 

We recall that an isolated non-saddle set $K$ is characterized by the fact that the trajectories of points nearby remain close to $K$ either in positive or negative time, ultimately being attracted or repelled, respectively, by $K$. To obtain our new class of compacta we relax this condition. We start by assuming the general situation in which $\varphi:M\times\mathbb{R}\longrightarrow M$ is a flow defined on an $n$-dimensional manifold.

\begin{definition}
    Let $K\subset M$ be a compact invariant set. We say that a point $x\in M$ is \emph{witnessed} by $K$ when its trajectory intersects every neighbourhood of $K$. We say that $K$ is a \emph{$W$-set} if it possesses a neighborhood $U$ all whose points are witnessed by $K$. The maximal $U$ for which this property is satisfied is called the \emph{witness region of} $K$ and is denoted by $\mathcal{W}(K)$. A $W$-set is said to be \emph{global} if its witness region is the whole phase space.
\end{definition}

It is easy to check that $x$ is witnessed by $K$ if and only if $(\omega(x) \cup \omega^*(x))\cap K\neq\emptyset$. It is clear that attractors, repellers and isolated non-saddle sets are $W$-sets. It is straightforward to see that the witness region of an isolated non-saddle set coincides with its region of influence and analogous relationships can be obtained for attractors and repellers. The following examples show that $W$-sets also encompass other well-known classes of isolated invariant compacta such as isolated unstable attractors and isolated weak attractors.

\begin{example}\label{ex:isoatr}
    Let $K$ be an unstable attractor. Recall that this is a compact invariant set that is possibly not stable but satifies that the set $\mathcal{A}(K)$ of points whose $\omega$-limit is non-empty and contained in $K$ is a neighborhood of $K$. Clearly every point $x \in U = \mathcal{A}(K)$ is witnessed by $K$, so $K$ is a $W$-set with $\mathcal{W}(K) \supseteq \mathcal{A}(K)$. In fact $\mathcal{W}(K)=\mathcal{A}(K)$: if a point $x$ is witnessed by $K$, its trajectory must visit any neighborhood of $K$ so in particular it must visit $\mathcal{A}(K)$. Since the latter is invariant, $x \in \mathcal{A}(K)$.

    For a specific example of an isolated unstable attractor see Figure~\ref{fig:mendelson}. The flow depicted in the picture is known as the \emph{Mendelson flow}. The point $q$ is a saddle fixed point whose basin of attraction $\mathcal{A}(\{q\})=\mathbb{R}^2\setminus\{p\}$ and, hence $\{q\}$ is an isolated unstable attractor. Notice that in this case, the isolated invariant compactum $K=\{p,q\}$ is a global $W$-set. 
\end{example}

\begin{figure}[h]
    \centering
    \includegraphics[width=0.7\linewidth]{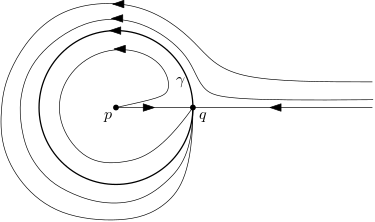}
    \caption{Mendelson flow}
    \label{fig:mendelson}
\end{figure}

\begin{remark}
    Reasoning as in Example~\ref{ex:isoatr} it is easy to see that in the case of attractors, repellers and isolated non-saddle sets, their witness regions coincide respectively with their basins of attraction, repulsion and region of influence.
\end{remark}

\begin{example} The following definitions comes from \cite[Chapter~V]{BhSz}. Suppose that $K$ is a compact invariant set and define $\mathcal{A}_w(K)$ as the set of points of $M$ whose $\omega$-limit has a non-empty intersection with $K$. The set $K$ is said to be a \emph{weak attractor} whenever $\mathcal{A}_w(K)$ is a neighborhood of $K$. As in the case of unstable attractors, it is clear that weak attractors are $W$-sets with $\mathcal{W}(K)=\mathcal{A}_w(K)$.

    For an example of a weak attractor that is not an unstable attractor see Figure~\ref{fig:mendmod}. This picture shows a flow that is a slight modification of the Mendelson flow. In this case, the fixed point $q$ (which is saddle as an invariant set) is a weak attractor with $\mathcal{A}_w(\{q\})=\mathbb{R}^2\setminus\{p\}$. As before, the  invariant compactum $K=\{p,q\}$ is a global $W$-set. Notice that, in this case, neither the $\omega$ nor the $\omega^*$-limit of any point different from $p$ and $q$ is contained in $K$. 
    
    In order to obtain an example of a $W$-set that is a saddle set and is neither a weak nor an unstable attractor we can modify this flow in such a way that the curve $\gamma$ locally repels the bounded component of its complement.
\end{example}

\begin{figure}[h]
    \centering
    \includegraphics[width=0.5\linewidth]{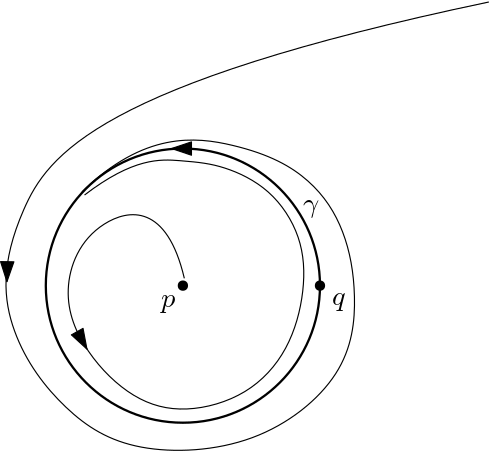}
    \caption{Modification of the Mendelson flow}
    \label{fig:mendmod}
\end{figure}

\begin{example}
    Suppose that $M$ is compact and $\{M_1,M_2,\ldots,M_k\}$ is a \emph{Morse decomposition} of $M$. That is, each $M_i$ is an isolated invariant set and each point in $M\setminus (\cup_{i=1}^k M_i)$ satisfies that there exist $i$ and $j$ with $i<j$ such that $\omega(x)\subset M_i$ and $\omega^*(x)\subset M_j$. 

    The isolated invariant set $K=\cup_{i=1}^k M_i$ is a global $W$-set. By the Conley-Morse equations of the decomposition one has $\chi h(K)=\chi(M)$.
\end{example}

The following result shows that the previous equality between the Euler characteristics holds in the more general case of global $W$-sets whenever the phase space is a closed $2$-manifold.

\begin{theorem} \label{teo:main} Let $K$ be an isolated, global $W$-set for a flow in a closed $2$-manifold $M$. Then $\chi(M) = \chi h(K)$.
\end{theorem}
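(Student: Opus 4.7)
The plan is to decompose $M$ via an isolating block $N$ for $K$ and its complement, then apply a Poincaré--Hopf-type formula on the complement exploiting the global $W$-set hypothesis. Choose $N$ so that (as permitted in dimension two) $N$ is a compact $2$-manifold with boundary $\partial N$ a disjoint union of circles, and $N^o, N^i$ are compact $1$-submanifolds of $\partial N$ whose intersection $T := N^o \cap N^i$ is the finite set of exterior tangency points. Set $N' := \overline{M \setminus N}$, so that $N'$ is also a compact $2$-manifold with $\partial N' = \partial N$ and $N \cap N' = \partial N$. Since $\chi(\partial N) = 0$, Mayer--Vietoris yields $\chi(M) = \chi(N) + \chi(N')$, and the index pair $(N, N^o)$ gives $\chi h(K) = \chi(N) - \chi(N^o)$. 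The theorem therefore reduces to the identity
\[
\chi(N') \;=\; -\chi(N^o).
\]

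Next, the global $W$-set hypothesis ensures that the flow on $N'$ has no nonempty invariant subset. Indeed, if $x \in N'$ had its whole orbit in $N'$, then $\omega(x) \cup \omega^*(x)$ would be a nonempty subset of $N'$, which is disjoint from $K \subset {\rm int}\,N$, contradicting that $x$ is witnessed by $K$. In particular the flow has no equilibria in $N'$, and every $t \in T$ is an \emph{interior} tangency for the flow restricted to $N'$: by the very definition of exterior tangency for $N$, the orbit through $t$ lies in $M \setminus N \subset N'$ for all small nonzero times.

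The computation of $\chi(N^o)$ is a direct boundary count. Every connected component of $N^o$ is either a full circle of $\partial N$ (contributing $0$ to $\chi$) or an arc whose two endpoints lie in $T$, and locally at each $t \in T$ the boundary $\partial N$ is split into exactly one arc of $N^o$ and one arc of $N^i$. Hence each $t \in T$ is an endpoint of precisely one arc of $N^o$, the number of such arcs is $|T|/2$, and $\chi(N^o) = |T|/2$. The computation of $\chi(N')$ proceeds via the Morse--Poincaré--Hopf formula for a flow on a compact $2$-manifold $W$ with boundary having no interior equilibria and only isolated quadratic boundary tangencies,
\[
\chi(W) \;=\; \sum_{p \in {\rm int}\,W} {\rm ind}(p) \;+\; \tfrac{1}{2}(e - i),
\]
where $e$ and $i$ count the exterior and interior tangencies on $\partial W$. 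Applied to $W = N'$ there are no interior equilibria, $e = 0$ and $i = |T|$, so $\chi(N') = -|T|/2 = -\chi(N^o)$, which combined with the earlier reductions yields $\chi(M) = \chi(N) - \chi(N^o) = \chi h(K)$.

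The main obstacle is invoking the Morse--Poincaré--Hopf formula for the continuous flow at hand rather than for a smooth vector field. This is bridged either by a smoothing argument near the finite set $T$ (feasible in dimension two since the topology of isolating blocks with only exterior tangencies is preserved under local smoothing of $\partial N$), or alternatively by a direct combinatorial computation of $\chi(N')$ from a cellular decomposition whose $1$-skeleton contains $\partial N$ together with the orbit segments in $N'$ through the tangency points of $T$, reducing the problem to counting the resulting ``flow-box'' cells.
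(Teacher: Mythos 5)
Your reduction is sound and, up to the point where you must compute $\chi(N')$, it coincides with the paper's proof: both arguments split $M$ along $\partial N$, use $(N,N^o)$ as an index pair to get $\chi h(K)=\chi(N)-\chi(N^o)$, count $\chi(N^o)=|T|/2$ from the fact that $N^o$ and $N^i$ are compact $1$-manifolds meeting along their common boundary $T$, and then everything hinges on the identity $\chi(\overline{M\setminus N})=-|T|/2$. Where you genuinely diverge is the engine for that identity. You invoke the Morse--Poincar\'e--Hopf boundary-tangency formula; the paper instead uses the Conley index of the \emph{empty} invariant set. Concretely: since $K$ is a global $W$-set, no orbit can remain in $E:=\overline{M\setminus N}$ forever, so from each tangency point $p_j$ the forward orbit re-enters $N$ after a finite time, tracing a compact arc $\gamma_j\subset E$ ending on $N^i$. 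The set $L:=N^i\cup\gamma_1\cup\dots\cup\gamma_{|T|}$ is then positively invariant in $E$ and an exit set, so $(E,L)$ is an index pair for the empty maximal invariant set and $\chi(E,L)=0$, while $L/N^i\simeq\vee^{|T|}\mathbb{S}^1$ gives $\chi(L,N^i)=-|T|$; hence $\chi(E,N^i)=-|T|$ and $\chi(E)=-|T|+\chi(N^i)=-|T|/2$. This is precisely the rigorous version of your second proposed bridge (the cell decomposition whose $1$-skeleton contains $\partial N$ and the orbit segments through the tangencies), and it is also where the full strength of the hypothesis is used: being a global $W$-set guarantees the arcs $\gamma_j$ are compact, not merely that $E$ contains no equilibria.

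The one point you should not leave as stated is the appeal to the Poincar\'e--Hopf-type formula itself. As found in the literature it is a theorem about smooth (or at least $C^1$) vector fields with nondegenerate quadratic tangencies to $\partial W$; here the flow is only continuous and the tangencies are defined purely topologically, so the formula is not off the shelf, and the ``smoothing near $T$'' repair is not immediate because there is no vector field to smooth --- one would have to approximate the flow itself while controlling both the tangency set and the absence of boundary equilibria. Your sign bookkeeping is correct ($e=0$, $i=|T|$ viewed from $E$, matching $\chi(E)=-|T|/2$), so the outline is right; but to turn it into a proof you should carry out the combinatorial computation, and doing so lands you essentially on the paper's index-pair argument above.
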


\begin{proof} Let $N$ be an isolating block for $K$. As mentioned in Section \ref{sec:background} we can take $N$ to be a compact $2$--manifold and $N^i,N^o$ to be compact $1$--manifolds (with boundary) in $\partial N$.

Set $E := \overline{M \setminus N}$, which is a compact $2$--manifold with boundary $\partial E = \partial N$. Let $p_1,\ldots,p_t$ be the tangency points in $\partial N$ (if there are none, skip ahead to the definition of $L$). Notice that, viewed from $E$, these are interior tangencies. For each them there exists $\epsilon > 0$ such that $p_i \cdot (0,\epsilon) \subset M \setminus N = {\rm int}\ E$ and, since the forward orbit of $p_i$ cannot remain in $E$ forever, there must exit a time $t_i > 0$ such that $p_i \cdot (0,t_i) \subset {\rm int}\ E$ and $q_i := p_i \cdot t_i \in \partial E$.

Denote by $\gamma_i$ the closed arc $p_i \cdot [0,t_i]$. We observe that $\gamma_1 \cup \ldots \cup \gamma_t$ is positively invariant in $E$. To check this consider a point $p$ in some $\gamma_i$ and follow its forward orbit while it remains in $E$. The point will travel along $\gamma_i$ until it first hits $\partial N$ at $q_i$. This point must belong to $N^i$ because its past trajectory comes from the interior of $E$ (hence from the complement of $N$). If $q_i$ is a transverse entry point into $N$ the forward orbit segment of $p$ inside $E$ ends here. Otherwise $q_i$ coincides with one of the other tangency points $p_j$ and we repeat the argument following the arc $\gamma_j$.

Set $L := N^i \cup \gamma_1 \cup \ldots \cup \gamma_t \subset E$. If there are no tangency points in $\partial N$ we just set $L = N^i$. Evidently $L$ is a closed subset of $E$, and it is positively invariant in $E$: we already checked this for points in $\gamma_1 \cup \ldots \cup \gamma_t$ in the previous paragraph, and for any $p \in L$ outside the arcs its forward trajectory in $E$ consists just of $\{p\}$ itself because $p$ is a transverse entry point into $N$. One also checks easily that $L$ is an exit set for $E$, since the smaller set $N^i$ already is. Thus $(E,L)$ is an index pair. Every point in $E$ must exit $E$ in the future (respectively, past) since otherwise its $\omega$-limit set (respectively, $\omega^*$-limit) would be contained in $E$ and this contradicts the assumption that $K$ is a $W$-set. Thus, the maximal invariant subset of $E$ is empty and its Conley index is zero, and so $(E/L,*)$ has the homotopy type of $(*,*)$.

By the additivity of the Euler characteristic we have \[\chi(E,N^i) = \chi(E,L) + \chi(L,N^i) = \chi(E/L,[L]) + \chi(L/N^i,[N^i])\] The previous paragraph shows that $\chi(E/L,*) = \chi(*,*) = 0$. Also, since each arc $\gamma_i$ intersects $N^i$ exactly at its endpoints, collapsing $N^i$ inside $L$ is the same as collapsing the endpoints of the arcs $\gamma_i$ to a single point, and this yields $\vee^t \mathbb{S}^1$ because the interiors of the arcs are mutually disjoint. It thus follows from the expression above that \[\chi(E,N^i) = \chi(\vee^t \mathbb{S}^1,*) = -t.\]

Writing $(M,\partial N) = (E,N^i) \cup (N,N^o)$ we have \[\chi(M,\partial N) = \chi(E,N^i) + \chi(N,N^o) - \chi(\partial N,N^i \cap N^o).\] Since $\chi(\partial N) = 0$ and $N^i \cap N^o$ consists of the $t$ tangency points $p_1,\ldots,p_t$,  we get \[\chi(M) = -t + \chi h(K) - (-t) = \chi h(K).\]
\end{proof}

If $M$ has a Morse decomposition $\{M_1, M_2,\ldots,M_k\}$ it is well known that $M_1$ is an attractor and $M_k$ is a repeller. In particular, the union of the Morse sets has at least a component that is an attractor and another one that is a repeller. The following result shows that a similar situation occurs with $W$-sets in the $2$-sphere $\mathbb{S}^2$.

\begin{theorem}\label{teo:positive}
Let $K$ be an isolated global $W$-set in the $2$-sphere $\mathbb{S}^2$. Then, $K$ has at least two components such that each of them is either an attractor or a repeller that does not separate $\mathbb{S}^2$. Moreover, if $K$ has exactly two connected components, they must form an attractor-repeller decomposition.
\end{theorem}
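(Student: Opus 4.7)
The plan is to leverage the identity $\chi h(K) = \chi(\mathbb{S}^2) = 2$ provided by Theorem~\ref{teo:main} and decompose the Conley-Euler characteristic across the connected components of $K$. First I would write $K = K_1 \sqcup \cdots \sqcup K_m$ and choose pairwise disjoint isolating blocks $N_1,\ldots,N_m$; by additivity of the Euler characteristic, $\sum_{i=1}^m \chi h(K_i) = 2$ where $\chi h(K_i) = \chi(N_i) - \chi(N_i^o)$.

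Next I would show each summand satisfies $\chi h(K_i) \leq 1$, with equality precisely when $K_i$ is an attractor or a repeller that does not separate $\mathbb{S}^2$. Since $\mathbb{S}^2$ has genus zero, each $N_i$ (chosen as a compact $2$-submanifold) is planar with $b_i \geq 1$ boundary circles (the inequality because $K_i \subsetneq \mathbb{S}^2$), so $\chi(N_i) = 2 - b_i \leq 1$. The exit set $N_i^o$ is a compact $1$-manifold, a disjoint union of arcs and circles contributing $1$ and $0$ to the Euler characteristic respectively, so $\chi(N_i^o) \geq 0$, and hence $\chi h(K_i) \leq 1$. Equality forces $b_i = 1$ (so $N_i$ is a disk) and $\chi(N_i^o) = 0$, which in $\partial N_i \cong \mathbb{S}^1$ leaves only $N_i^o = \emptyset$ or $N_i^o = \partial N_i$. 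Using that $N_i^i$ and $N_i^o$ meet only along their common boundary and that $\partial N_i$ has no boundary, the first possibility forces $N_i^i = \partial N_i$ (making $K_i$ an attractor) and the second forces $N_i^i = \emptyset$ (making $K_i$ a repeller). In either case the flow provides a \v{C}ech-cohomological deformation retraction of $N_i$ onto $K_i$, so $\check{H}^*(K_i) \cong \check{H}^*(N_i) = H^*(\mathrm{pt})$, and Alexander duality gives $\tilde{H}_0(\mathbb{S}^2 \setminus K_i) \cong \check{H}^1(K_i) = 0$, i.e.\ $K_i$ does not separate $\mathbb{S}^2$. Since the summands add to $2$ and each is at most $1$, at least two of them must equal $1$, establishing the first assertion (and in particular $m \geq 2$).

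For the second assertion, assume $m = 2$. I would first exclude the possibility that both $K_i$ are attractors: their basins $\mathcal{A}(K_1), \mathcal{A}(K_2)$ are disjoint open invariant sets (since $\omega(x) \neq \emptyset$ in compact $\mathbb{S}^2$), so the complement $B$ is closed, invariant, and disjoint from $K$; but then for any $y \in B$ one has $\omega(y), \omega^*(y) \subset B$, disjoint from $K$, contradicting the $W$-set property, forcing $B = \emptyset$ and thereby disconnecting $\mathbb{S}^2$. A symmetric argument excludes both being repellers, so without loss of generality $K_1$ is an attractor and $K_2$ a repeller. A preliminary step promotes the $W$-set condition to $\omega(y) \cap K \neq \emptyset$ (and symmetrically $\omega^*(y) \cap K \neq \emptyset$) for every $y \in \mathbb{S}^2$: otherwise any $z \in \omega(y) \setminus K$ would satisfy $\omega(z), \omega^*(z) \subset \omega(y) \setminus K$, contradicting the $W$-set hypothesis at $z$. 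The main obstacle is then to rule out $\omega(y) \cap K_2 \neq \emptyset$ for $y \in \mathbb{S}^2 \setminus K$, and here I would use that the repeller $K_2$ admits arbitrarily small negatively invariant isolating blocks $W$: if $y \cdot t_n \to z \in K_2$ with $t_n \to +\infty$, then eventually $y \cdot t_n \in W$, and negative invariance forces $y = (y \cdot t_n) \cdot (-t_n) \in W$; letting $W$ shrink onto $K_2$ then yields $y \in K_2$, contradicting $y \notin K$. Hence $\omega(y) \cap K_1 \neq \emptyset$, so $\omega(y) \subset K_1$ because $K_1$ is an attractor, and the symmetric argument yields $\omega^*(y) \subset K_2$; thus $(K_1, K_2)$ is an attractor-repeller decomposition.
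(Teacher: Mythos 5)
Your proof follows the paper's strategy almost exactly: both parts hinge on $\chi h(K)=\chi(\mathbb{S}^2)=2$ from Theorem~\ref{teo:main}, an additive decomposition of the Conley--Euler characteristic into pieces each bounded above by $1$ with equality forcing a nonseparating attractor or repeller, and, for the two-component case, the same connectedness argument ruling out two attractors (or two repellers) followed by a stability argument producing the connecting orbits. Your local bound is obtained by slightly different (and perfectly valid) means --- counting boundary circles of a planar block and arcs of $N_i^o$ rather than the paper's computation $H_0(N,N^o)=H_2(N,N^o)=0$ --- and your treatment of the repeller via shrinking negatively invariant neighbourhoods is a legitimate variant of the paper's use of the invariance of the region of repulsion.

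There is, however, one genuine gap at the very first step: you write $K=K_1\sqcup\cdots\sqcup K_m$ and choose pairwise disjoint isolating blocks for the components, which presupposes that $K$ has \emph{finitely many} connected components (finiteness is also what makes each component isolated). This is not automatic for an isolated invariant set --- indeed the paper itself explicitly allows for isolated sets with infinitely many components in the proof of Theorem~\ref{teo:nondual} --- and the first assertion of the theorem is meant to hold without that hypothesis. The paper sidesteps the issue by decomposing the isolating block $N$ (a compact $2$-manifold, hence with finitely many components $N_1,\dots,N_n$) rather than $K$, setting $K_i:=K\cap N_i$; these $K_i$ need not be connected, but once one of them is shown to be a stable attractor (or repeller) with $\check H^1(K_i)=0$, each of its connected components inherits both properties, which yields the statement about components of $K$. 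Your argument goes through verbatim after this substitution, so the gap is repairable, but as written the proof only covers the case in which $K$ has finitely many components.
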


To prove the theorem we first make a general remark concerning the Euler characteristic of the Conley index in surfaces. Suppose $N$ is a connected isolating block and denote by $K$ its maximal invariant subset. If $K$ is neither an attractor nor a repeller then both $N^o$ and $N^i$ must be nonempty, so $H_0(N,N^o) =  0$ and $H_2(N,N^o) = H^0(N,N^i) = 0$. Hence in this case $\chi h(K) = -{\rm rk}\ H_1(N,N^o) \leq 0$. If $K$ is an attractor or a repeller then $\chi h(K) = 1 - {\rm rk}\ \check{H}^1(K)$, so we still have $\chi h(K) \leq 0$ unless ${\rm rk}\ \check{H}^1(K) = 0$, in which case $\chi h(K) = 1$. We can exploit this to prove the following result:

\begin{proof} Let $N$ be an isolating block for $K$ and decompose it into its connected components $N_1,\ldots,N_n$; set also $K_i := K \cap N_i$. It follows from the observation above that each $\chi h(K_i) \leq 0$ except for those $K_i$ that are attractors or repellers with $\check{H}^1(K_i) = 0$. The latter is equivalent to saying that $K_i$ does not separate $\mathbb{S}^2$. Since by Theorem~\ref{teo:main} we have \[2 = \chi(\mathbb{S}^2) = \sum_{i=1}^n \chi h(K_i),\] at least two summands must be equal to $1$ and so at least two $K_i$ must be nonseparating attractors or repellers. The same holds, then, for any connected component of these $K_i$. 

Finally, suppose $K$ has exactly two connected components $K_1$ and $K_2$. As we have seen, each of them is an attractor or a repeller. Observe first that $K_1$ and $K_2$ cannot be both attractors (or repellers). For, suppose they were. Clearly their basins of attraction are disjoint, and they must cover all of $\mathbb{S}^2$ since otherwise the complement of their union would be a nonempty, compact, invariant subset of $\mathbb{S}^2$ and neither the $\omega$- nor the $\omega^*$-limit of their points would intersect $K$. Thus $\mathbb{S}^2$ would be the union of two disjoint, open, nonempty sets which is a contradiction. Hence if (say) $K_1$ is an attractor, $K_2$ must be a repeller. For any point $x \not\in K_1$ its $\omega^*$-limit is compact and disjoint from $\mathcal{A}(K_1)$ so in particular also from $K_1$. Since $K$ is a $W$-set, $\omega^*(x) \cap K_2 \neq \emptyset$, and this implies that the negative semiorbit of $x$ must visit the repulsion region of $K_2$. The latter is invariant, so $x$ actually belongs to it. Thus in fact $\omega^*(x) \subset K_2$. Summing up, $\{K_1,K_2\}$ is an attractor-repeller decomposition of $\mathbb{S}^2$.
\end{proof}

A nice consequence of Theorem~\ref{teo:positive} is the following result that improves \cite[Theorem~35]{BSdis}.

\begin{corollary}
   Let $K$ be an isolated, connected, global $W$-set for a flow on $\mathbb{R}^2$. Then, $K$ is either a global attractor or a global repeller. 
\end{corollary}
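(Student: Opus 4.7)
The plan is to reduce to Theorem~\ref{teo:positive} on $\mathbb{S}^2$ via one-point compactification. Let $\hat{M}=\mathbb{R}^2\cup\{\infty\}\cong\mathbb{S}^2$ and extend the flow $\varphi$ to $\hat{\varphi}$ on $\hat{M}$ by declaring $\infty$ a fixed point. Continuity of $\hat{\varphi}$ at $(\infty,t_0)$ is automatic from the continuity of $\varphi$: if $x_n\to\infty$ and $t_n\to t_0$ but $\varphi(x_n,t_n)$ had a subsequence converging to some $y\in\mathbb{R}^2$, then the group identity $x_n=\varphi(\varphi(x_n,t_n),-t_n)$ together with joint continuity of $\varphi$ would force $x_n\to\varphi(y,-t_0)\in\mathbb{R}^2$, contradicting $x_n\to\infty$.

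Next set $K':=K\cup\{\infty\}$, a compact invariant subset of $\hat{M}$ with exactly two connected components since $K$ is connected and bounded in $\mathbb{R}^2$. I claim $K'$ is an isolated global $W$-set for $\hat{\varphi}$. The $W$-set property transfers from $K$ to $K'$ because every orbit in $\mathbb{R}^2$ meets every neighborhood of $K\subset K'$, and $\infty$ itself is in $K'$. For isolation, pick an isolating block $N$ of $K$ and a closed disk $B\subset\mathbb{R}^2$ with $N\subset\operatorname{int} B$. If some $y\in\mathbb{R}^2\setminus B$ had its full orbit contained in $\hat{M}\setminus\operatorname{int} B$, that orbit would miss the open neighborhood $\operatorname{int} B$ of $K$, contradicting the $W$-set assumption on $K$. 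Hence the maximal invariant set in $\hat{M}\setminus\operatorname{int} B$ is $\{\infty\}$, and $N\sqcup(\hat{M}\setminus\operatorname{int} B)$ is an isolating neighborhood for $K'$.

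Finally, Theorem~\ref{teo:positive} applies to the isolated global $W$-set $K'\subset\mathbb{S}^2$: since its two components are $K$ and $\{\infty\}$, they must form an attractor-repeller decomposition of $\mathbb{S}^2$. If $K$ is the attractor, its basin of attraction in $\mathbb{S}^2$ is the complement of the dual repeller $\{\infty\}$, namely $\mathbb{R}^2$, so $K$ is a global attractor of the original flow $\varphi$. In the dual case $K$ is a global repeller, completing the proof.

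The only genuine subtlety in this approach is verifying that $\{\infty\}$ is isolated in $\hat{M}$; this is precisely where the $W$-set hypothesis is used, since without it there could be nontrivial invariant sets accumulating at infinity that would obstruct the application of Theorem~\ref{teo:positive}. The continuity of the extension, by contrast, is a purely formal consequence of the flow identity.
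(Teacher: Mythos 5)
Your proof is correct and follows exactly the paper's route: compactify $\mathbb{R}^2$ to $\mathbb{S}^2$ with $\infty$ fixed and apply Theorem~\ref{teo:positive} to the two-component isolated global $W$-set $K\cup\{\infty\}$. The paper states this in one line; your verifications of continuity of the extension, isolation of $\{\infty\}$, and the transfer of the $W$-set property are the right details to fill in.
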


\begin{proof} Extend the flow to the one-point compactification $\mathbb{R}^2\cup\{\infty\}$ leaving the infinity point fixed and then apply Theorem~\ref{teo:positive} to the the two component, isolated, global $W$-set $K\cup\{\infty\}$.
\end{proof}

If we assume that the global $W$-set $K$ in the previous result is an isolated weak attractor, we obtain the following result that improves \cite[Theorem~18]{MSGS}.

\begin{corollary}
    Let $K$ be an isolated, connected, global weak attractor for a flow on $\mathbb{R}^2$. Then $K$ is a global attractor.
\end{corollary}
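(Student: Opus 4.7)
The plan is to reduce this corollary to the preceding one by observing that every weak attractor is a $W$-set, and then to exclude the repeller alternative using precisely the weak-attraction hypothesis.

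First I would verify that the hypotheses of the preceding corollary are satisfied. A weak attractor $K$ is a $W$-set with $\mathcal{W}(K)\supseteq\mathcal{A}_w(K)$, since any $x$ with $\omega(x)\cap K\neq\emptyset$ is automatically witnessed by $K$. Globality of $K$ as a weak attractor gives $\mathcal{A}_w(K)=\mathbb{R}^2$, hence $\mathcal{W}(K)=\mathbb{R}^2$, so $K$ is a global $W$-set. By hypothesis it is also isolated and connected, so the preceding corollary applies and forces $K$ to be either a global attractor or a global repeller of the flow on $\mathbb{R}^2$.

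The remaining task, which is the heart of the argument, is to rule out the global repeller alternative. Suppose towards a contradiction that $K$ is a global repeller. Unwinding the proof of the preceding corollary, this means that after one-point-compactifying to $\mathbb{S}^2$ (with $\infty$ a fixed point), the pair $(\{\infty\},K)$ is an attractor-repeller decomposition of $\mathbb{S}^2$. Consequently, for every $x\in\mathbb{R}^2\setminus K$ the $\omega$-limit in $\mathbb{S}^2$ equals $\{\infty\}$, which means the positive trajectory of $x$ eventually leaves every compact subset of $\mathbb{R}^2$, and therefore its $\omega$-limit in $\mathbb{R}^2$ is empty. In particular $\omega(x)\cap K=\emptyset$ for every such $x$, so $\mathcal{A}_w(K)\subseteq K$. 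This contradicts the weak-attractor hypothesis that $\mathcal{A}_w(K)$ is a neighbourhood of $K$, since $K\subsetneq\mathbb{R}^2$ has empty interior is not necessary, but in any case points arbitrarily close to $K$ in $\mathbb{R}^2\setminus K$ would have to lie in $\mathcal{A}_w(K)$, which we just showed is impossible. Hence $K$ is a global attractor, as claimed.

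No step in this argument looks to be an obstacle: the only slightly delicate point is identifying what ``global repeller'' means in the statement of the preceding corollary, and then translating this back to a statement about $\omega$-limits in $\mathbb{R}^2$ versus $\mathbb{S}^2$. Once this is clear, the incompatibility with the weak-attractor condition $\omega(x)\cap K\neq\emptyset$ is immediate.
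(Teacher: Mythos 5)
Your proof is correct and follows exactly the route the paper intends (the paper leaves this corollary without an explicit proof, presenting it as an immediate consequence of the preceding one): reduce to the attractor-or-repeller dichotomy for connected global $W$-sets on $\mathbb{R}^2$, then observe that in the repeller case every orbit outside $K$ escapes to infinity, so $\mathcal{A}_w(K)\subseteq K\subsetneq\mathbb{R}^2$, contradicting that $K$ is a global weak attractor. The only cosmetic remark is that the sentence about $K$ having empty interior is garbled; the clean statement is that a compact proper subset of the connected space $\mathbb{R}^2$ cannot be a neighbourhood of itself.
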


\section{Non-global $W$-sets} \label{sec:Wsets2}

Suppose that $K$ is a $W$-set of a flow defined on a compact manifold $M$. Unlike the previous section, we now assume that $K$ is not necessarily global and let $\mathcal{W}$ be its witness region. 

\begin{proposition}
$\mathcal{W}$ is an invariant open subset of $M$ with finitely many connected components.
\end{proposition}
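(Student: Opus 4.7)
The plan is to verify the three properties in turn; openness and invariance are immediate from the definitions, while the finiteness of components is the main point and will require exhibiting a sufficiently nice neighborhood of $K$ inside $\mathcal{W}$.

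For \emph{openness}, the set $\mathcal{W}$ is by construction the union of all open neighborhoods of $K$ consisting entirely of witnessed points, hence open. For \emph{invariance}, the key observation is that being witnessed depends only on the orbit, not on the basepoint: since $\omega(x\cdot t)=\omega(x)$ and $\omega^*(x\cdot t)=\omega^*(x)$, if $x$ is witnessed then so is every $x\cdot t$. Consequently, for any $t\in\mathbb{R}$ the set $\varphi_t(\mathcal{W})$ is an open neighborhood of $K=\varphi_t(K)$ in which every point is witnessed, and maximality of $\mathcal{W}$ forces $\varphi_t(\mathcal{W})\subseteq \mathcal{W}$; applying this to $-t$ as well gives equality.

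The hardest step is the finiteness of components. My plan is to reduce the count to the components of a small, topologically tame neighborhood of $K$. Since $K$ is an isolated invariant set in a manifold, it admits a basis of isolating blocks $B$ that are compact manifolds with boundary (as recalled in Section~\ref{sec:background}), and such a $B$ necessarily has finitely many connected components. Pick $B$ small enough that $B\subseteq \mathcal{W}$, which is possible because $\mathcal{W}$ is an open neighborhood of $K$ and the blocks form a basis. Then $U:=\mathring{B}\subseteq \mathcal{W}$ is an open neighborhood of $K$ with finitely many connected components $U_1,\ldots,U_n$.

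To conclude, every orbit contained in $\mathcal{W}$ must meet $U$, since $U$ is a neighborhood of $K$ and witnessed orbits intersect every neighborhood of $K$. Invariance of $\mathcal{W}$ makes each of its components a union of orbits, so each component meets $U$ and hence some $U_i$. Conversely, each $U_i$ is connected and contained in $\mathcal{W}$, so it lies entirely in a single component of $\mathcal{W}$. The assignment sending $U_i$ to the component of $\mathcal{W}$ containing it is therefore well defined and, by the previous sentence, surjective onto the components of $\mathcal{W}$; thus $\mathcal{W}$ has at most $n$ components. The main structural input throughout is the existence of the ``nice'' isolating block, which the paper has already secured.
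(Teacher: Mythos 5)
Your treatment of openness is circular. In the paper $\mathcal{W}$ is the \emph{maximal} neighbourhood of $K$ all of whose points are witnessed, i.e.\ the set of \emph{all} witnessed points (this is how it is used later, for instance when $M\setminus\mathcal{W}$ is identified with the maximal compact invariant set in $M\setminus K$). A neighbourhood need not be open, so the openness of this set is a genuine claim, not something that holds ``by construction''; describing $\mathcal{W}$ as a union of open neighbourhoods quietly replaces the set of witnessed points by its interior, which is precisely the identification that has to be justified. The missing argument is short: since $\mathcal{W}$ is invariant, so is $\partial\mathcal{W}$; hence for $x\in\partial\mathcal{W}$ the set $\omega(x)\cup\omega^*(x)$ is contained in the closed invariant set $\partial\mathcal{W}$, which is disjoint from $K$ because $K\subset\operatorname{int}\mathcal{W}$. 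Thus $x$ is not witnessed, so $\mathcal{W}\cap\partial\mathcal{W}=\emptyset$ and $\mathcal{W}$ is open. Your invariance argument is fine.

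For the finiteness of components your strategy (trap every component of $\mathcal{W}$ by a neighbourhood of $K$ with finitely many pieces) is sound and close in spirit to the paper's, but the neighbourhood you choose is not available in the stated generality: Section~\ref{sec:background} only provides isolating blocks that are compact manifolds with boundary in dimensions $2$ and $3$, or for smooth flows, whereas this proposition concerns a continuous flow on a compact manifold of arbitrary dimension. The block is also unnecessary: since $M$ is locally connected and $K$ is compact, finitely many connected open sets contained in $\mathcal{W}$ cover $K$, and their union already serves as your $U$. (The paper instead takes $U$ to be the union of the finitely many components of $\mathcal{W}$ that meet $K$, and shows that every component $C$ of $\mathcal{W}$ satisfies $\overline{C}\cap K\neq\emptyset$ and hence meets $U$.) With either repair your counting step goes through exactly as written.
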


\begin{proof}
The invariance of $\mathcal{W}$ is clear since the $\omega$- and $\omega^*$- limit sets coincide for all points of the same trajectory. To see that $\mathcal{W}$ is open it is sufficient to check that if $x\in\partial\mathcal{W}$, then $x\notin\mathcal{W}$. Since $\mathcal{W}$ is invariant so is $\partial \mathcal{W}$ and, hence, if $x\in\partial\mathcal{W}$ the set $\omega(x)\cup\alpha(x)$ must be contained in $\partial\mathcal{W}$ which is disjoint from $K$ because $\mathcal{W}$ is a neighborhood of $K$. Therefore $\mathcal{W}$ is an open set.

The components of $\mathcal{W}$ are open in $M$ and cover $K$, which is compact, so only finitely many of them $\mathcal{W}_1,\ldots,\mathcal{W}_r$ have a nonempty intersection with $K$. Let $C$ be any component of $\mathcal{W}$ and $x \in C$. Since $C$ is invariant, $\omega(x) \cup \omega^*(x) \subset \overline{C}$ and, since $x$ is witnessed by $K$, it follows that $\overline{C} \cap K \neq \emptyset$. In particular $C$ must intersect $\cup_{i=1}^r \mathcal{W}_i$ because the latter is a neighbourhood of $K$, and so $C$ must be one of the $\mathcal{W}_i$. Thus the $\mathcal{W}_i$ are all the components of $\mathcal{W}$.
\end{proof}

In fact the proof of the proposition shows that each $K_i := K \cap \mathcal{W}_i$ is a $W$-set whose witness region is precisely the connected component $\mathcal{W}_i$ of $\mathcal{W}$.

\begin{corollary}\label{coro:dual}
If $K$ is a non-global $W$-set and $M$ is compact then $L=M\setminus\mathcal{W}$ is non-empty and it is the maximal compact invariant set in $M\setminus K$ and, hence, it is isolated as an invariant set. Moreover, $K\cup L$ is a global $W$-set.
\end{corollary}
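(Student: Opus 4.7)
The proof should proceed in three short stages, each fairly direct given the definitions and the proposition just established.

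First, I would verify that $L = M \setminus \mathcal{W}$ is compact, invariant, and nonempty. Nonemptiness is immediate from the hypothesis that $K$ is non-global, so $\mathcal{W} \subsetneq M$. Compactness follows because $\mathcal{W}$ is open in compact $M$, and invariance of $L$ is inherited from invariance of $\mathcal{W}$.

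Second, I would show $L$ is the maximal compact invariant set in $M \setminus K$. The inclusion $L \subseteq M \setminus K$ holds since $K \subseteq \mathcal{W}$. For maximality, let $A \subseteq M \setminus K$ be compact and invariant; I claim $A \cap \mathcal{W} = \emptyset$. Suppose $x \in A \cap \mathcal{W}$. Since $A$ is compact and disjoint from $K$, there is a neighbourhood $U$ of $K$ with $U \cap A = \emptyset$. But $x \in \mathcal{W}$ means the trajectory of $x$ intersects every neighbourhood of $K$, contradicting that $A$ is invariant and contains $x$. Hence $A \subseteq L$. To upgrade this to isolation, I pick an open neighbourhood $V$ of $K$ with $\overline{V} \subseteq \mathcal{W}$ (possible since $\mathcal{W}$ is an open neighbourhood of the compactum $K$), and set $N := M \setminus V$. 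Then $N$ is compact, $L \subseteq M \setminus \overline{V} = \mathrm{int}(N)$, and any invariant subset of $N$ is disjoint from $K$ (because $K \subseteq V$), hence contained in $L$ by the maximality just shown. So $N$ is an isolating neighbourhood of $L$.

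Finally, I would verify that $K \cup L$ is a global $W$-set. It is compact and invariant as the union of two such sets, so only the witnessing property is at issue. For $x \in K \cup L$ this is trivial since the trajectory of $x$ contains $x$. For $x \notin K \cup L$ one has $x \in \mathcal{W}$, so the trajectory of $x$ meets every neighbourhood of $K$; but every neighbourhood of $K \cup L$ is in particular a neighbourhood of $K$, so the trajectory meets it as well. Thus every point of $M$ is witnessed by $K \cup L$, i.e., $K \cup L$ is global.

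No step looks genuinely obstructive; the only mildly delicate point is the isolation of $L$, where one must avoid the temptation to take $N = \overline{M \setminus \mathcal{W}}$ and instead interpose a strictly smaller neighbourhood $V$ of $K$ so that $L$ lies in the interior of the isolating neighbourhood.
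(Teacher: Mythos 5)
Your proof is correct and follows essentially the same route as the paper: maximality of $L$ via the observation that a compact invariant set disjoint from $K$ cannot contain a witnessed point, and globality of $K\cup L$ by splitting $M$ into $\mathcal{W}$ and $L$. The only difference is that you spell out the isolation of $L$ (via the isolating neighbourhood $M\setminus V$), which the paper dispatches with a ``hence''; that added care is correct and harmless.
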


\begin{proof} It is clear that $L$ is the maximal compact invariant set in $M\setminus K$ since every point $x\notin L$ is witnessed by $K$ and, hence, the closure of its trajectory is not contained in $M\setminus K$. The fact that $K\cup L$ is a global $W$-set is straightforward since every point in $M$ is either witnessed by $K$ or it is contained in the compact invariant set $L$ and, in particular, witnessed by $L$.
\end{proof}

We shall refer to the isolated invariant set $L$ from Corollary~\ref{coro:dual} as the \emph{complementary} isolated invariant set to the $W$-set $K$. Notice that in general $L$ is not a $W$-set itself. For instance, for the flow depicted in Figure~\ref{fig:dis} the isolated non-saddle $K$ is a $W$-set whose complementary isolated invariant set is the saddle fixed point $p$ which is clearly not a $W$-set.

The following useful lemma studies conditions that ensure that the union of a $W$-set with its complementary isolated invariant set is a global non-saddle set when the phase space is a closed $2$-manifold.

\begin{theorem}\label{teo:nondual}
   Suppose that $K$ is an isolated $W$-set and let $L$ be its complementary isolated invariant set. Then $\chi(K \cup L) \geq \chi(\mathcal{W})$ and the equality holds if and only $K \cup L$ is a non-saddle set.
\end{theorem}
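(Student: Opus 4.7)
My plan is to reduce the theorem to a Conley-index calculation, combining Theorem~\ref{teo:main} with the structure of an isolating block for $K\cup L$. By Corollary~\ref{coro:dual}, $K\cup L$ is a global isolated $W$-set in the closed $2$-manifold $M$, so Theorem~\ref{teo:main} gives
\[
\chi h(K\cup L)=\chi(M)=\chi(\mathcal{W}),
\]
where the last identification uses that $\overline{\mathcal{W}}=M$ whenever $L$ has empty interior, the generic case for isolated invariant sets. The claim thus reduces to comparing $\chi(K\cup L)$ with $\chi h(K\cup L)$.

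Since $K$ and $L$ are disjoint isolated invariant sets I pick an isolating block for $K\cup L$ of the form $N=N_K\sqcup N_L$. In dimension two one may arrange, as recalled in Section~\ref{sec:background}, that $N$ is a compact $2$-manifold with boundary and that $N^o\subset\partial N$ is a compact $1$-manifold whose boundary consists exactly of the tangency points on $\partial N$. The flow-induced \v{C}ech homotopy recalled in Section~\ref{sec:background} gives $\chi(N)=\chi(K\cup L)$, and by definition of the homological Conley index one has $\chi h(K\cup L)=\chi(N,N^o)=\chi(N)-\chi(N^o)$. Combining these,
\[
\chi(K\cup L)-\chi(\mathcal{W})=\chi(N^o).
\]
Since $N^o$ is a compact $1$-manifold, $\chi(N^o)$ equals the number of its arc components and is in particular non-negative, proving the inequality; note that the right-hand side is automatically block-independent, being equal to $\chi(K\cup L)-\chi h(K\cup L)$.

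Equality amounts to $N^o$ having no arc components, equivalently $N^o$ being a disjoint union of circles, equivalently $\partial N$ carrying no tangencies. By the characterization at the beginning of Section~\ref{sec:nonsaddle1}, the existence of a tangency-free isolating block (i.e.\ one of the form $N=N^+\cup N^-$) is equivalent to $K\cup L$ being non-saddle. The main obstacle I anticipate is this final step: the forward implication (non-saddle forces $\chi(N^o)=0$) follows at once by using a block $N^+\cup N^-$, but the converse requires one to promote the vanishing of the block-invariant $\chi(N^o)$ to the explicit existence of such a special block. In the $2$-dimensional setting this should be carried out by analysing the local flow on the boundary circles of $\partial N$, using the transversality of the flow in the absence of tangencies to rule out transit orbits through $N$ and thereby decompose $N$ as $N^+\cup N^-$.
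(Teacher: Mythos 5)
Your overall strategy (pass to the global $W$-set $K\cup L$, apply Theorem~\ref{teo:main}, and measure the failure of non-saddleness by tangencies on an isolating block) matches the paper's, but two of the identities you rely on are false, and the formula they produce is contradicted by the simplest example. First, $\chi(\mathcal{W})\neq\chi(M)$ in general: density of $\mathcal{W}=M\setminus L$ does not preserve the Euler characteristic. The correct relation, which the paper obtains by duality, is $\chi(\mathcal{W})=\chi(M)-\chi(L)$. Second, $\chi(N)=\chi(K\cup L)$ fails for an arbitrary isolating block: the flow-induced homotopies of Section~\ref{sec:background} give $\check{H}^*(N^+)\cong\check{H}^*(K\cup L)$ (equivalently a shape retraction of $N$ onto $N^-\cup N^o$), not $\check{H}^*(N)\cong\check{H}^*(K\cup L)$; the paper only asserts the latter for blocks of the form $N^+\cup N^-$, i.e.\ precisely in the non-saddle case you are trying to characterize, so invoking it here is circular. (A connected ``dumbbell'' block isolating two unrelated saddle points has $\chi(N)=1$ but $\chi(K)=2$.) Consequently your displayed identity $\chi(K\cup L)-\chi(\mathcal{W})=\chi(N^o)$ is unsupported, and indeed it is falsified by the north--south flow on $\mathbb{S}^2$ with $K$ the south pole: there $L$ is the north pole, $K\cup L$ is non-saddle, yet $\chi(K\cup L)=2$ while $\chi(\mathcal{W})=\chi(\mathbb{R}^2)=1$. (This example also shows the inequality in the statement should read $\chi(K)\geq\chi(\mathcal{W})$; that is what the paper's proof establishes and what the later corollaries use.)

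The paper's argument avoids both pitfalls. For the global case it writes $\chi(M,K)=\chi(M,N)+\chi(N,K)$, proves $\chi(N,K)\leq 0$ by showing $\check{H}^2(N,K)=0$ via Alexander duality (every component of $N\setminus K$ must reach $\partial N$, since $K$ is a global $W$-set), and computes $\chi(M,N)=\chi(E,\partial E)=-t/2$ from the tangency count already carried out in the proof of Theorem~\ref{teo:main}; equality then forces $t=0$ for the block in hand, whence $N=N^+\cup N^-$. The general case is reduced to this one through $\chi(\mathcal{W})-\chi(K)=\chi(M)-\chi(K\cup L)$. Note that this also dissolves the ``main obstacle'' you flag at the end: there is no need to promote the vanishing of a block-independent invariant to the existence of a special block, because the quantity that vanishes is $\chi(M,K\cup L)$, and its vanishing forces $t=0$ for the particular block you already chose.
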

\begin{proof} (1) We first prove the theorem in the case when $K$ is already a global $W$-set (i.e. $\mathcal{W} = M$) and so $L = \emptyset$). Let $N$ be an isolating block for $K$. After discarding components of $N$ we may assume that each component of $N$ intersects $K$. Also, recall from the discussion about the Conley index in two dimensions in Section \ref{sec:background} that we may take $N$ to be a compact $2$-manifold, with $N^i$ and $N^o$ compact $1$-submanifolds (with boundary) of $\partial N$.

Since the trajectory of every point in $N-K$ must intersect $\partial N$ either in the future or the past, every connected component of $N - K$ must intersect $\partial N$. Thus $H_0(N-K,\partial N)=0$ and by Alexander duality $\check{H}^2(N,K) = 0$. Thus $\chi(N,K) = -{\rm rk}\ \check{H}^1(N,K) \leq 0$.

Observe that $\chi(M,N) = \chi(E,\partial E) = \chi(E)$ by excision and Lefschetz duality. While proving Theorem \ref{teo:main} we obtained that $\chi(E,N^i) -t$, where $t$ is the number of tangency points in $N$. Hence \[\chi(E) = \chi(E, N^i) + \chi(N^i) = -t + t/2 = -t/2,\] where we have used that $\chi(N^i)$ is the number of contractible components of $N^i$ (i.e. arcs) and each of these is determined by a pair of tangency points.

Putting together the two preceding paragraphs we now have \[\chi(M,K) = \chi(M,N) + \chi(N,K) \leq \chi(E,\partial E) = -t/2 \leq 0.\] Moreover, if the equality $\chi(M,K) = 0$ holds then $t = 0$ so every component of $\partial N$ is comprised entirely of entry or exit points. Thus $N = N^+ \cup N^-$ and $K$ is nonsaddle.

(2) Now we extend the result to the general case. Since $K$ is isolated, its first Betti number is finite and so $\chi(K)$ is either finite (if $K$ has finitely many connected components) or $+\infty$ (if not). The same holds for $L$. Taking into account that $L = M \setminus \mathcal{W}$ and so by Alexander duality $\chi(\mathcal{W}) = \chi(M,L) = \chi(M) - \chi(L)$, we see that $\chi(\mathcal{W})$ is finite or $-\infty$. Thus it makes sense to write \[\chi(\mathcal{W}) - \chi(K) = \chi(M) - \chi(L) - \chi(K) = \chi(M) - \chi(K \cup L)\] even if some of the Euler characteristics are infinite. Since $K\cup L$ is an isolated global $W$-set, case (1) above ensures that $K\cup L$ is non-saddle if and only if $\chi(K\cup L)=\chi(M)$, which by the above amounts to saying $\chi(\mathcal{W}) = \chi(K)$.
\end{proof}

Using Theorem~\ref{teo:nondual} it is possible give a sufficient condition for an  isolated invariant compactum to be  non-saddle for flows on the plane. This result improves and generalizes one of the implications of \cite[Theorem~9]{BSbif}.

\begin{corollary}\label{coro:carac}
Let $K$ be an isolated $W$-set for a flow on a (not necessarily compact) $2$-manifold $M$. Assume that $\chi(K) = \chi(\mathcal{W})$. Then $K$ is non-saddle.
\end{corollary}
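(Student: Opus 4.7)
The plan is to apply Theorem~\ref{teo:nondual} to the restriction of the flow to the witness region $\mathcal{W}$, where $K$ is a global $W$-set. Because the non-saddle property depends only on a neighbourhood of $K$, it is enough to prove $K$ is non-saddle as an isolated invariant set of $\varphi|_{\mathcal{W}}$; in this restricted flow the complementary invariant set is empty, so the target statement reduces to the equality case of Theorem~\ref{teo:nondual}(1) with $\mathcal{W}$ playing the role of $M$.

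When $M$ is itself compact this reduction is immediate. Setting $L:=M\setminus\mathcal{W}$, the long exact sequence of the compact pair $(M,L)$ gives $\chi(\mathcal{W})=\chi(M)-\chi(L)$, and disjointness of $K$ and $L$ yields $\chi(K\cup L)=\chi(K)+\chi(L)$; hence the hypothesis $\chi(K)=\chi(\mathcal{W})$ rearranges to $\chi(K\cup L)=\chi(M)$. Theorem~\ref{teo:nondual} then forces the global $W$-set $K\cup L$ to be non-saddle, and because $K$ and $L$ are disjoint compacta any isolating block of $K\cup L$ of the form $N^+\cup N^-$ splits into a disjoint union of isolating blocks of the same form for $K$ and $L$ separately, so $K$ inherits the non-saddle property.

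For general, possibly non-compact $M$ the plan is to mimic the proof of Theorem~\ref{teo:nondual}(1) directly inside $\mathcal{W}$. Fix an isolating block $N\subset\mathcal{W}$ of $K$ that is a compact $2$-manifold with $N^i,N^o$ compact $1$-submanifolds of $\partial N$ meeting at tangency points $p_1,\dots,p_t$. The bound $\chi(N,K)\le 0$ from that proof is purely local to $N$ (it uses only Alexander duality inside the compact $N$ together with the fact that every point of $N\setminus K$ exits $N$), and so it carries over unchanged. The main obstacle is the analogue of the equality $\chi(M,N)=-t/2$, which in the compact case came from Lefschetz duality on the compact manifold $E=\overline{M\setminus N}$; here $E':=\overline{\mathcal{W}\setminus N}$ need not be compact.

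The hard part will therefore be extending the Euler-characteristic calculation to the non-compact ambient. The cleanest fix is to pass to compactly supported cohomology: on an oriented open $2$-manifold of finite type Poincar\'e duality forces $\chi_c=\chi$, and the $W$-set hypothesis still guarantees that every orbit in $E'$ exits $E'$ in finite time, so the tangency-arc argument from the proof of Theorem~\ref{teo:main} upgrades to compact supports and gives $\chi_c(E')=-t/2$. Additivity of $\chi_c$ over the closed/open decomposition $\mathcal{W}=N\sqcup(\mathcal{W}\setminus N)$ then yields $\chi(\mathcal{W})-\chi(K)\le -t/2\le 0$; the hypothesis $\chi(K)=\chi(\mathcal{W})$ collapses this chain to equalities, forcing $t=0$ and $\check{H}^1(N,K)=0$, which as in Theorem~\ref{teo:nondual} forces $N=N^+\cup N^-$. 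An alternative avoiding compactly supported cohomology is to exploit the finiteness of $\chi(\mathcal{W})$ to truncate $\mathcal{W}$ outside a sufficiently large isolating block of $K$ and cap off the resulting boundary circles with disks carrying stationary sinks or sources, producing a compact $2$-manifold on which the compact case already treated applies.
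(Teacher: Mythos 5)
Your reduction of the compact case to Theorem~\ref{teo:nondual} is fine (it is essentially part (2) of that theorem's proof), but the whole content of the corollary is the non-compact case, and neither of your two proposed routes closes it. In route (a) you want to rerun the proof of Theorem~\ref{teo:main} with $E'=\overline{\mathcal{W}\setminus N}$ in place of $E$, but two steps there use compactness of $E$ essentially. First, the arcs $\gamma_i$ are built by following the forward orbit of each tangency point $p_i$ until it returns to $\partial N$; when $E'$ is non-compact this return time need not exist, because a point of $\mathcal{W}$ is only guaranteed to visit $N$ in \emph{one} time direction, so the forward orbit of $p_i$ may escape to an end of $\mathcal{W}$ without ever meeting $\partial N$ again. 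Second, the conclusion that $E/L$ is contractible is a Conley-index statement about a \emph{compact} index pair with empty maximal invariant set; $(E',L')$ is not a compact pair and the forward exit time from $E'$ can be infinite, so neither the index machinery nor the flow-induced deformation onto the exit set is available. (You also take $\mathcal{W}$ orientable and of finite type without justification; finiteness of $\chi(\mathcal{W})$ does follow from the hypothesis, since $\chi(K)$ is finite or $+\infty$ while $\chi(\mathcal{W})$ is finite or $-\infty$, but orientability can fail.)

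Route (b) is the right idea but is not a well-defined flow construction as stated: truncating requires circles near each end that the flow crosses transversally (not guaranteed), and modifying the flow on a collar and inserting sinks or sources can change which points are witnessed by $K$. The paper's execution is cleaner: finiteness of $\chi(\mathcal{W})$ plus the classification of open surfaces gives that $\mathcal{W}$ is a closed surface $\mathcal{W}_\infty$ minus a \emph{finite set of points} $L$; the original flow extends to $\mathcal{W}_\infty$ simply by declaring the points of $L$ fixed (no modification of the dynamics anywhere on $\mathcal{W}$), the set $L$ is then exactly the complementary isolated invariant set of $K$ in $\mathcal{W}_\infty$, and Theorem~\ref{teo:nondual} on the closed surface $\mathcal{W}_\infty$ yields that $K\cup L$, hence $K$, is non-saddle. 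Replacing your disk-capping by this one-point-per-end compactification turns your sketch into a complete proof.
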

\begin{proof} Since $\mathcal{W}$ has finitely many connected components, $\chi(\mathcal{W})$ is either finite or $-\infty$. An isolated invariant set always has $\chi(K)$ finite or $+\infty$, so the assumption $\chi(K) = \chi(\mathcal{W})$ forces both Euler characteristics to be finite. In particular, by the classification of open surfaces \cite[Theorem 3]{Ric}, $\mathcal{W}$ is (homeomorphic to) a closed surface $\mathcal{W}_{\infty}$ with a set $L$ of finitely many points removed. The flow on $\mathcal{W}$ can be extended to $\mathcal{W}_{\infty}$ by leaving the points in $L$ fixed. It then follows from Theorem \ref{teo:nondual} that $K \cup L$ is non-saddle, and in particular so is $K$. 
\end{proof}

\begin{remark}
    The converse of Corollary \ref{coro:carac} also holds if $K$ is connected and $M=\mathbb{R}^2$ or $\mathbb{S}^2$ by \cite[Theorem 9]{BSbif}. However if $K$ has more than one connected component then $\chi(K)=\chi(\mathcal{I}(K))$ does not hold in general (recall that for a non-saddle set $\mathcal{W} = \mathcal{I}(K)$). To see this consider the $2$-sphere $\mathbb{S}^2$ and fix an open meridian $m_0$, that is a meridian with the poles removed. Consider a flow which leaves the poles fixed and every open meridian $m\neq m_0$ is a trajectory connecting the two poles oriented from north pole to south pole, i.e., in $\mathbb{S}^2\setminus m_0$ the flow is topologically equivalent to a north pole south pole dynamics. The open meridian $m_0$ is broken into three trajectories: a fixed point $p$ on the equator, and two trajectories connecting the point $p$ with the poles oriented accordingly. By removing the point $p$ we obtain a flow in $\mathbb{R}^2$ having a global non-saddle set $K$ consisting of two fixed points. In this case $\chi(K)=2$ while $\chi(\mathcal{I}(K))=\chi(\mathbb{R}^2)=1$.
\end{remark}

The following result characterizes when a connected isolated weak attractor is an attractor, improving and encompassing \cite[Corollary~25]{SGTrans}.

\begin{corollary}
Let $K$ be a connected isolated weak attractor for a flow on a (not necessarily compact) $2$-manifold $M$. Then $K$ is an attractor if and only if $\chi(K)=\chi(\mathcal{A}_w(K))$.
\end{corollary}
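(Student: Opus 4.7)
The plan is to treat the two implications separately. The forward direction is the easy one: if $K$ is an attractor, Lyapunov stability furnishes a positively invariant neighbourhood $V$ of $K$ contained in the basin $\mathcal{A}(K)$. Any point whose $\omega$-limit meets $K$ must eventually enter $V$ and is thereby absorbed into $\mathcal{A}(K)$, so $\mathcal{A}_w(K)=\mathcal{A}(K)$. The flow then yields the standard \v Cech deformation retraction of $\mathcal{A}(K)$ onto $K$ described in Section~\ref{sec:background}, giving $\chi(K)=\chi(\mathcal{A}(K))=\chi(\mathcal{A}_w(K))$.

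For the reverse direction I plan to use Corollary~\ref{coro:carac} as the main engine. Since a weak attractor is in particular a $W$-set whose witness region is exactly $\mathcal{A}_w(K)$, the hypothesis $\chi(K)=\chi(\mathcal{A}_w(K))$ together with that corollary yields immediately that $K$ is non-saddle. Moreover, as in the proof of Corollary~\ref{coro:carac}, the finiteness of the Euler characteristics forces $\mathcal{A}_w(K)$ to be homeomorphic to a closed $2$-manifold $\mathcal{W}_\infty$ with a finite set $L$ of points removed, and extending the flow by declaring $L$ fixed turns $K\cup L$ into a global non-saddle set of $\mathcal{W}_\infty$. The remaining task is then to conclude that $K$ itself is an attractor: I would fix an isolating block $N=N^+\cup N^-$ for $K$ and try to show that the repelling side $N^-\setminus K$ must be empty, so that $N=N^+$ is a positively invariant neighbourhood of $K$ and $\omega(x)\subseteq K$ for every $x\in N$.

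The hard part is justifying that $N^-\setminus K$ is indeed empty. My approach is to use the structure theorem recalled at the start of Section~\ref{sec:nonsaddle1}, which classifies the connected components of $\mathcal{I}(K)\setminus K$ into attracting, repelling, homoclinic and dissonant pieces. The weak-attractor hypothesis immediately rules out the repelling and dissonant components (both contain points with $\omega$-limit disjoint from $K$), leaving only attracting and homoclinic components; the homoclinic ones must then be excluded using the connectedness of $K$, the $2$-dimensional topology of $\mathcal{W}_\infty$, and the precise constraint $\chi(K\cup L)=\chi(\mathcal{W}_\infty)$ obtained above. I expect this last step to carry most of the technical difficulty, and it may well require a refined attractor/repeller dichotomy inside $\mathcal{W}_\infty$ in the spirit of Corollary~\ref{cor:1}, applied to the components of $K\cup L$ to pin down $K$ as the attracting one.
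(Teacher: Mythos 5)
Your forward implication and the opening move of the converse coincide with the paper's: an attractor has $\mathcal{A}_w(K)=\mathcal{A}(K)$, the flow gives the shape retraction of the basin onto $K$ (as in the proof of Theorem~\ref{thm:decomp}), and in the converse Corollary~\ref{coro:carac} applied to $\mathcal{W}(K)=\mathcal{A}_w(K)$ yields that $K$ is non-saddle. The genuine gap is precisely the step you flag as ``the hard part'': showing that the repelling side $N^-\setminus K$ of an isolating block is empty, i.e.\ that no homoclinic behaviour survives. You only express the hope that the constraint $\chi(K\cup L)=\chi(\mathcal{W}_\infty)$, connectedness, and a dichotomy ``in the spirit of Corollary~\ref{cor:1}'' will exclude homoclinic components. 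This is not carried out, and the proposed tool is doubtful: Corollary~\ref{cor:1} and the attractor/repeller dichotomy of \cite[Theorem~25]{BSdis} quoted in Section~\ref{sec:bifurcations} require $H^1=0$, and the compactified witness region $\mathcal{W}_\infty$ need not satisfy this (it could be a torus). Moreover the Euler-characteristic information has already been spent in deducing non-saddleness; what actually kills the homoclinic orbits is the weak-attraction hypothesis, which your sketch does not bring to bear at this stage.

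The paper closes this gap with a short pointwise argument rather than a further topological one. Take an isolating block $N=N^+\cup N^-$ for the (now known to be non-saddle) set $K$ contained in $\mathcal{A}_w(K)$ and let $x\in N^-$. Then $\omega^*(x)\subset K$ automatically, since the whole negative semiorbit of $x$ stays in the isolating block; and $\omega(x)\cap K\neq\emptyset$ because $x\in\mathcal{A}_w(K)$, which by non-saddleness upgrades to $\omega(x)\subset K$. Thus $x$ satisfies $\omega(x)\cup\omega^*(x)\subset K$, and \cite[Theorem~25]{BSdis} is invoked to conclude that, $K$ being connected and non-saddle, this forces $x\in K$. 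Hence $N^-=K$, so $N=N^+$ is a positively invariant isolating neighbourhood and $K$ is an attractor. Note that this is where connectedness does its real work; your proposal mentions connectedness but never specifies how it enters. To repair your write-up, replace the speculative Euler-characteristic exclusion of homoclinic components by this direct argument (or by an explicit proof that a connected non-saddle set in the relevant setting admits no homoclinic points outside itself).
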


\begin{proof}
    It $K$ is an attractor then $\mathcal{A}_w(K)=\mathcal{A}(K)$ and, using the flow as in the proof of Theorem~\ref{thm:decomp}, $\chi(K)=\chi(\mathcal{A}(K))$.

    Conversely, assume that $\chi(K) = \chi(\mathcal{A}_w(K))$. Then by Corollary~\ref{coro:carac} $K$ is non-saddle. Let $N$ be an isolating block of the form $N=N^+\cup N^-$ contained in $\mathcal{A}_w(K)$.  Let $x\in N^-$, then, $\omega(x)\cap K\neq\emptyset$ and, hence, $\omega(x)\cup\omega^*(x)\subset K$ but by \cite[Theorem~25]{BSdis} this only can happen if $x\in K$. Therefore, $N=N^+$ and $K$ is an attractor. 
\end{proof}

%Attractors are characterized by the fact that they contain the $J^+$ of every point contained in their basin of attraction. Hence, if $K$ is an isolated unstable attractor, there exists some $x\in\mathcal{A}(K)$ such that $J^+(x)\nsubseteq K$. These points are called \emph{explosion points}. An explosion point $x$ is said to be an \emph{external explosion point} if $x\notin K$. Otherwise $x$ is said to be an \emph{internal explosion point}.

Suppose $K$ is a stable attractor. For any $x \in \mathcal{A}(K)$ and any neighbourhood $V$ of $K$ there exists $T > 0$ such that $x [T,+\infty) \subset V$ because $K$ attracts $x$, but in fact this happens uniformly: there exists a neighbourhood $U$ of $x$ such that $U [T,+\infty) \subset V$. When $K$ is an unstable attractor this uniform attraction is no longer true at every point; those where it fails are called \emph{explosion points}. An explosion point $x$ is said to be an \emph{external explosion point} if $x\notin K$. Otherwise $x$ is said to be an \emph{internal explosion point}. These notions were introduced in \cite{AthPac}. An example of an unstable attractor with external explosions is provided by the Mendelson flow from Example~\ref{ex:isoatr}. The point $q$ is an isolated unstable attractor and if $q=(q_1,0)$, every point in $(q_1,\infty)\times\{0\}$ is an external explosion point. Notice that in this case $\{q\}$ is a saddle set. 

It turns out that the existence of external explosion points is intimately related to the non-saddleness of the unstable attractor. In fact, \cite[Proposition~9]{BSdis} ensures that $K$ has only internal explosions if and only if $K$ is non-saddle. A direct consequence of Theorem~\ref{teo:nondual} is that, for flows on $2$-manifolds, in order to determine the existence of external explosions we only have to compare the Euler characteristic of $K$ with that of its basin of attraction. This version of the result extends \cite[Theorem~15]{SGTrans} to the non-connected case:

\begin{corollary}
   Let $K$ be an isolated unstable attractor in a (not necessarily compact) $2$-manifold $M$. Then $K$ has only internal explosions if and only if $\chi(K)=\chi(\mathcal{A}(K))$.
\end{corollary}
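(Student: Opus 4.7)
My plan is to reduce both implications to the non-saddleness characterization already recorded in the paper, namely the result from \cite[Proposition~9]{BSdis} (explicitly invoked in the paragraph preceding the corollary) that an isolated unstable attractor has only internal explosions precisely when it is non-saddle. The other key observation is that, as pointed out in Example~\ref{ex:isoatr}, an isolated unstable attractor $K$ is automatically a $W$-set with witness region $\mathcal{W}(K)=\mathcal{A}(K)$; this allows one to switch freely between basin of attraction and witness region.

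For the backward direction I would simply rewrite the hypothesis as $\chi(K)=\chi(\mathcal{W}(K))$ and apply Corollary~\ref{coro:carac} (which applies verbatim since $K$ is an isolated $W$-set in a $2$-manifold) to conclude that $K$ is non-saddle; the quoted result from \cite{BSdis} then yields the absence of external explosions. This half is essentially a one-line reduction.

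For the forward direction I would first use \cite[Proposition~9]{BSdis} to pass from ``only internal explosions'' to ``$K$ is non-saddle,'' so that $K$ admits an isolating block of the form $N=N^+\cup N^-$. Because $K$ is an attractor, no point sufficiently close to $K$ is repelled, which forces the flow to provide a strong deformation retraction of $N^+$ onto $K$ and hence $\check{H}^*(N^+)\cong\check{H}^*(K)$. Next, the absence of external explosions is precisely the statement that the attraction of points in $\mathcal{A}(K)\setminus K$ to $K$ is uniform, so the first-entry time into $\mathrm{int}\,N^+$ depends continuously on the starting point. Flowing every point in $\mathcal{A}(K)$ forward to this entry time then defines a strong deformation retraction of $\mathcal{A}(K)$ onto $N^+$, exactly as in the retraction argument used in the proof of Theorem~\ref{thm:decomp} and in the preceding weak-attractor corollary. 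Composing the two retractions gives $\chi(\mathcal{A}(K))=\chi(N^+)=\chi(K)$.

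The step I expect to be most delicate is the continuity of the first-entry time into $N^+$ in the forward direction; this is the quantitative content of the hypothesis ``no external explosions,'' and is the step where that hypothesis is truly used. Everything else is either an immediate invocation of Corollary~\ref{coro:carac} or a routine flow-homotopy calculation of the kind already carried out several times earlier in the paper.
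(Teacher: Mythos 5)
Your backward implication is correct and is essentially the paper's own route: identify $\mathcal{A}(K)$ with $\mathcal{W}(K)$, apply Corollary~\ref{coro:carac} to get non-saddleness, and invoke \cite[Proposition~9]{BSdis}. The forward implication, however, has a genuine gap. You claim that flowing each point of $\mathcal{A}(K)$ forward to its first-entry time into $N^+$ gives a strong deformation retraction of $\mathcal{A}(K)$ onto $N^+$, whence $\chi(\mathcal{A}(K))=\chi(N^+)=\chi(K)$. This retraction does not exist in general. First, the entry time is necessarily discontinuous at $K$ itself whenever $K$ is genuinely unstable: a point $y_n\in N^-\setminus K$ converging to $y\in K$ must first exit $N$ (which takes arbitrarily long as $y_n\to y$) before it can re-enter $N^+$, so its entry time tends to $+\infty$ while the entry time of $y$ is $0$. ``No external explosions'' only controls the behaviour at points \emph{outside} $K$; the discontinuity you need to rule out sits exactly at the internal explosion points. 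Second, and more decisively, the conclusion $\check{H}^*(\mathcal{A}(K))\cong\check{H}^*(K)$ that any such retraction would yield is simply false: take the meridian circle $K$ on the torus from the proof of Corollary~\ref{cor:classification}(iii). It is a global non-saddle unstable attractor with no external explosions, $\mathcal{A}(K)=T^2$, and $\check{H}^1(K)=\mathbb{Z}$ while $H^1(T^2)=\mathbb{Z}^2$. Only the Euler characteristics agree ($0=0$), which is precisely why the statement is about $\chi$ and not about cohomology.

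The paper's forward argument avoids this entirely. It stabilizes $K$ to a stable attractor $\hat{K}$ with the same basin, uses a Lyapunov level set to get a global circle section of $\mathcal{A}(K)\setminus\hat{K}$, and compactifies $\mathcal{A}(K)$ to a closed surface $\mathcal{A}_\infty$ by adding one fixed point per end; the added finite set $L$ is the repeller dual to $\hat{K}$. Then $K$ non-saddle implies $K\cup L$ non-saddle, and the equality case of Theorem~\ref{teo:nondual} applied in $\mathcal{A}_\infty$ gives $\chi(K\cup L)=\chi(\mathcal{A}_\infty)$, i.e.\ $\chi(K)=\chi(\mathcal{A}(K))$. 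You should replace your retraction argument with this (or some equivalent use of Theorem~\ref{teo:nondual}); the rest of your write-up can stand.
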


\begin{proof} By \cite[Theorem 1.25, p. 64]{BhSz} any unstable attractor $K$ can be enlarged to a (stable) attractor $\hat{K}$ whose basin of attraction is still the same, $\mathcal{A}(K)$. We call this $\hat{K}$ the stabilization of $K$. By suitably taking a level set of a Lyapunov function one obtains a global section $\Sigma$ of the flow in $\mathcal{A}(K) \setminus \hat{K}$. This means that the flow provides a homeomorphism $\Sigma \times (-\infty,+\infty) \cong \mathcal{A}(K) \setminus \hat{K}$. Since the phase space is a $2$-manifold $\Sigma$ is a closed $1$-manifold \cite{chewningowen1}; i.e. it is a finite union of copies of $\mathbb{S}^1$. We can use this to construct a manifold compactification $\mathcal{A}_{\infty}$ of $\mathcal{A}(K)$ as follows. First take the disjoint union of $\Sigma \times [-\infty,0]$ and $\mathcal{A}(K)$ and identify every $(x,s) \in \Sigma \times (-\infty,0)$ with $xs \in \mathcal{A}(K)$. The result is a manifold with boundary $\Sigma \times \{-\infty\}$ and whose interior is (homeomorphic to) $\mathcal{A}(K)$. By collapsing each boundary component separately to a single point one obtains a closed $2$-manifold $\mathcal{A}_{\infty}$ with a finite set $L$ of distinguished points the complement of which is $\mathcal{A}(K)$. The flow can be extended to $\mathcal{A}_{\infty}$ by leaving the distinguished points fixed.

Recall that $K$ has only internal explosions if and only if it is non-saddle. Notice that $L$ is a repeller (it is the repeller dual to $\hat{K}$), and so $K$ is non-saddle if and only if $K \cup L$ is non-saddle. By Theorem~\ref{teo:nondual}, the latter holds if and only if $\chi(K) = \chi(\mathcal{W}) = \chi(\mathcal{A}(K))$.
\end{proof}

Suppose now that $K$ is an isolated non-saddle set. The structure of the flow in its region of influence is easy to understand in the absence of the so-called dissonant points as we have described briefly in Section~\ref{sec:nonsaddle1}. A \emph{dissonant point} of $K$ is a point $x\in M\setminus K$ that is a limit of homoclinic points $x_n$ but is not itself homoclinic to $K$, i.e., such that either $\omega(x)\cap K=\emptyset$ or $\omega^*(x)\cap K=\emptyset$. If there exist dissonant points $x$ outside the region of influence of $K$, then there also exist dissonant points in the region of influence. These are constructed by taking a sequence of homoclinic points $x_n$ that converges to $x$ and intersecting their orbits with the exit set of an isolating block $N$ of the form $N=N^+\cup N^-$ for $K$ to obtain a sequence of points $y_n$ which has an accumulation point $y$ whose $\omega^*$-limit is contained in $K$ and whose $\omega$-limit does not intersect $K$. This point $y$ therefore belongs to $\mathcal{I}(K) \setminus K$ and is dissonant. Details can be found in \cite[Proposition 20]{BSdis}.

The following result states that, in $2$-manifolds, to determine the existence of dissonant points it is sufficient to compare the Euler characteristics of $K$ and its region of influence. This result extends \cite[Theorem~32]{BSdis} removing assumptions about connectedness of $K$, and compactness and orientability of $M$.

\begin{corollary}
Let $K$ be an isolated non-saddle set in a (not necessarily compact) $2$-manifold $M$. Then $K$ has no dissonant points if and only if $\chi(K)=\chi(\mathcal{I}(K))$.
\end{corollary}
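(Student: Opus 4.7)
The plan is to compactify the region of influence $\mathcal{I}(K)$ into a closed $2$-manifold and then invoke Theorem~\ref{teo:nondual}, following the template already used in the preceding corollary on unstable attractors. Since $\chi(K)$ is always finite, the hypothesis $\chi(K)=\chi(\mathcal{I}(K))$ of the $(\Leftarrow)$ direction forces $\chi(\mathcal{I}(K))$ to be finite, so by the classification of open surfaces one may write $\mathcal{I}(K)=\mathcal{I}_\infty\setminus L$ with $\mathcal{I}_\infty$ a closed $2$-manifold and $L$ a finite set of points. For the $(\Rightarrow)$ direction, starting from the absence of dissonant points, the structure theorem from Section~\ref{sec:nonsaddle1} presents $\mathcal{I}(K)\setminus K$ as a finite disjoint union of parallelizable pieces with compact sections, and the same compactification is produced by collapsing each non-homoclinic end to a point. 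In both cases I would extend the flow to $\mathcal{I}_\infty$ by leaving the points of $L$ fixed, so that $K\cup L$ becomes a compact, isolated, global $W$-set in $\mathcal{I}_\infty$.

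Applying Theorem~\ref{teo:nondual} (with empty complementary set) to $K\cup L$ in $\mathcal{I}_\infty$ yields $\chi(K\cup L)\geq \chi(\mathcal{I}_\infty)$, with equality if and only if $K\cup L$ is non-saddle. The identities $\chi(K\cup L)=\chi(K)+|L|$ and $\chi(\mathcal{I}_\infty)=\chi(\mathcal{I}(K))+|L|$ show that this numerical equality is equivalent to $\chi(K)=\chi(\mathcal{I}(K))$. The corollary therefore reduces to the geometric equivalence: \emph{$K$ has no dissonant points if and only if $K\cup L$ is non-saddle in $\mathcal{I}_\infty$}. The ``only if'' direction is the easier one: if no dissonant points exist, every component of $\mathcal{I}(K)\setminus K$ is parallelizable of type $1$, $2$, or $3$; since type $3$ components have both ends in $K$, each $\ell\in L$ is approached only by type $1$ or type $2$ pieces and is therefore locally a sink or a source of the extended flow, and combined with the non-saddleness of $K$ this produces an isolating block for $K\cup L$ of the form $N^+\cup N^-$.

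The main obstacle is the converse implication. Given a dissonant point $x$ with, say, $\omega^*(x)\subset K$ and $\omega(x)=\{\ell\}\subset L$, I would use that $x$ is the limit of a sequence of homoclinic points $x_n$ in its component $C$. Translating each $x_n$ forward along its orbit into a prescribed neighborhood of $\ell$ produces points $y_n$ arbitrarily close to $\ell$ lying on homoclinic orbits. Such an orbit must return to $K$, and in doing so it necessarily traverses points of $C$ bounded away from $K\cup L$, because any loop from near $\ell$ back to $K$ inside $C$ must pass through the bulk of $C$; consequently both the positive and negative semiorbits of $y_n$ exit any sufficiently small neighborhood of $K\cup L$, contradicting the non-saddle condition. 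The delicate point is verifying that the escape occurs on \emph{both} semiorbits simultaneously, which follows because the non-saddleness of $K$ forces only bounded time to be spent near $K$ before the orbit of $y_n$ re-enters the interior of $C$.
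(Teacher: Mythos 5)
Your proposal is correct and its skeleton --- compactify $\mathcal{I}(K)$ to a closed surface $\mathcal{I}_\infty$ by adding a finite set $L$ of rest points, apply Theorem~\ref{teo:nondual} to the global $W$-set $K\cup L$, and reduce the statement to the equivalence ``no dissonant points $\Leftrightarrow$ $K\cup L$ is non-saddle'' --- is exactly the paper's. The one place where you genuinely diverge is the implication ``$K\cup L$ non-saddle $\Rightarrow$ no dissonant points''. The paper disposes of this in one line by quoting the structure theorem for global non-saddle sets in closed manifolds from Section~\ref{sec:nonsaddle1}: the complement of $K\cup L$ in $\mathcal{I}_\infty$ is parallelizable, so every component of $\mathcal{I}(K)\setminus K$ is of type $1$--$3$, and dissonant points live only in type-$4$ components. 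You instead give a direct contradiction argument with the homoclinic sequence; this works and is more self-contained, but two points deserve attention. First, you should justify that $\omega(x)$, computed in $\mathcal{I}_\infty$, is a single point of $L$: it is a nonempty connected invariant set that meets neither $K$ (by dissonance) nor $\mathcal{I}(K)\setminus K$ (a point there would drag a limit set back into $K$), hence is one point of the finite set $L$; you also tacitly use the paper's reduction of dissonant points outside $\mathcal{I}(K)$ to dissonant points inside it, stated just before the corollary. Second, your closing worry that the escape must be verified ``on both semiorbits simultaneously'' via a bounded-time-near-$K$ estimate is a red herring: take the neighbourhood $U$ of $K\cup L$ to be a disjoint union of a neighbourhood of $K$ and small balls around the points of $L$; since $y_n$ lies in the ball around $\ell$ while $\omega(y_n)$ and $\omega^*(y_n)$ are both nonempty subsets of $K$, each semiorbit of $y_n$ is a connected set meeting two distinct components of $U$ and must therefore leave $U$. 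No timing argument is needed, and $y_n\to\ell$ then violates the non-saddle condition for $K\cup L$ directly.
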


\begin{proof} Suppose first that $K$ has no dissonant points. Then, and referring to the description of $\mathcal{I}(K) \setminus K$ given in Section \ref{sec:nonsaddle1}, every component of $\mathcal{I}(K) \setminus K$ is either homoclinic or $K$ behaves as an attractor or a repeller in it. Homoclinic components already have a compact closure, and we may compactify $\mathcal{I}(K)$ by repeating the construction of the preceding corollary in the attracting or repelling components of $\mathcal{I}(K) \setminus K$. Thus we again obtain a closed $2$-manifold $\mathcal{I}_{\infty}$ with a finite set of fixed points $L$ whose complement is $\mathcal{I}(K)$. Each point in $L$ is either an attractor or a repeller and $K$ is non-saddle, so $K \cup L$ is non-saddle and so $\chi(K) = \chi(\mathcal{W}) = \chi(\mathcal{I}(K))$ by Theorem~\ref{teo:nondual}.

Now assume that $\chi(K) = \chi(\mathcal{I}(K))$. The proof of Corollary \ref{coro:carac} shows that $\mathcal{I}(K)$ can be compactified to a closed $2$-manifold $\mathcal{I}_{\infty}$ by adding a finite set of rest points $L$, and that $K\cup L$ is a global non-saddle set for $\mathcal{I}_{\infty}$. As a consequence, every component of $\mathcal{I}_{\infty}\setminus (K\cup L)=\mathcal{I}(K)\setminus K$ is homoclinic to $K\cup L$ and, hence, parallelizable. Therefore, $K$ does not have dissonant points in $\mathcal{I}(K)\setminus K$ and so it does not have dissonant points at all (as a subset of $M$).
\end{proof}

\bibliographystyle{amsplain}
%    Insert the bibliography data here.
\bibliography{Biblio1}
\end{document}